\documentclass{article}
\usepackage{amsfonts}
\usepackage{amsmath}
\usepackage{amssymb}
\usepackage{graphicx}
\usepackage{cite}

\newtheorem{theorem}{Theorem}

\newtheorem{corollary}[theorem]{Corollary}

\newtheorem{lemma}[theorem]{Lemma}

\newenvironment{proof}[1][Proof]{\noindent\textbf{#1.} }{\ \rule{0.5em}{0.5em}}
\input{tcilatex}

\begin{document}

\title{Conjugate operators to operators of stochastic integration}
\author{I.P. Smirnov \\
Institute of Applied Physics RAS\\
46 Ul'anova Street, Nizhny Novgorod, Russia}
\date{}
\maketitle

\begin{abstract}
The conjugate problem in stochastic optimal control can be formulated in
terms of operators conjugated to the operators of stochastic integration 
\cite{V801, Bismut,V802}. In this paper we study some of such operators
acting on the spaces of progressively measurable random functions.
\end{abstract}

\textbf{Key words:} optimal control, stochastic integrals, progressively
measurable functions, conjugate operators

\section{Introduction}

\subsection{Functional spaces}

Let $\left( \Omega ,\mathfrak{F},\mathbf{P}\right) $ be a complete
probability space, $\left\{ \mathfrak{F}_{t},\;t\in \left[ 0,1\right]
\right\} $ the right continuous stream of subfields of the field $\mathfrak{F%
}$. We use the following notations: $\mathfrak{B}^{n}$ is the $\sigma $%
-field of Borel sets on $R^{n}$, $\limfunc{mes}$ is the Lebesgue measure on $%
\mathfrak{B}^{n}$, $\mathfrak{B}^{1}\times \mathfrak{F}$ and $\limfunc{mes}%
\times $ $\mathbf{P}$ are the direct products of the fields $\mathfrak{B}^{1}
$, $\mathfrak{F}$ and measures $\limfunc{mes}$,$\mathbf{P}$, respectively, $%
\mathfrak{BF\subset B}^{1}\times \mathfrak{F}$ is the $\sigma $-field of
progressively measurable with respect to $\left\{ \mathfrak{F}_{t}\right\} $
subsets of the product $\left[ 0,1\right] \times \Omega $.

Let us introduce the main functional spaces used below. For $p\in\left[
1,\infty\right) $ let denote by

\begin{enumerate}
\item $L_{p}\left( \Omega\right) $ the Banach space of $\mathfrak{F}$%
-measured random values $\xi\left( \omega\right) $, $\omega\in\Omega$ ( i.e.
classes of values equivalent to the each other with respect to probability
measure $\mathbf{P}$) with the norm%
\begin{equation*}
\left\Vert \xi\right\Vert _{L_{p}\left( \Omega\right) }=\left\{ \mathbf{M}%
\left\vert \xi\right\vert ^{p}\right\} ^{1/p}
\end{equation*}
($\mathbf{M}$ is the expectation under the measure $\mathbf{P}$\textbf{)};

\item $E_{p}$ the set of $\mathfrak{BF}$-measurable random functions $%
f\left( t,\omega\right) $,$\,t\in\left[ 0,1\right] $, $\omega\in\Omega$ such
that $\mathbf{P}$-a.s. 
\begin{equation*}
\int\limits_{0}^{1}\left\vert f\left( t,\omega\right) \right\vert
^{p}\,dt<\infty; 
\end{equation*}

\item $L_{p}$ the Banach space of $\mathfrak{BF}$-measurable functions $%
f\left( t,\omega\right) $ (classes of functions equivalent to the each other
with respect to measure $\limfunc{mes}\times\mathbf{P}$) with the norm%
\begin{equation*}
\left\Vert f\right\Vert _{p}=\left\{ \mathbf{M}\int\limits_{0}^{1}\left\vert
f\left( t,\omega\right) \right\vert ^{p}\,dt\right\} ^{1/p}; 
\end{equation*}

\item $N_{p}$ the Banach space of $\mathfrak{BF}$-measurable functions $%
f\left( t,\omega\right) $ (classes of functions) with the norm 
\begin{equation*}
\left\Vert f\right\Vert _{N_{p}}=\left\{ \mathbf{M}\left( \int
\limits_{0}^{1}\left\vert f\left( t,\omega\right) \right\vert
^{2}\,dt\right) ^{p/2}\right\} ^{1/p}; 
\end{equation*}

\item $H_{p}$ the Banach space of $\mathfrak{BF}$-measurable functions $%
f\left( t,\omega\right) $ (classes of functions) with the norm 
\begin{equation*}
\left\vert f\right\vert _{p}=\left\{ \sup_{0\leqslant t\leqslant1}\mathbf{M}%
\left\vert f\left( t,\omega\right) \right\vert ^{p}\right\} ^{1/p}; 
\end{equation*}

\item $DH_{p}$ the normed space of $\mathfrak{BF}$-measurable functions $%
f\left( t,\omega\right) $ (classes of $\mathbf{P}$- indistinguishable
functions) having $\mathbf{P}$-a.s. trajectories of the class $D$ \footnote{%
A function $\varphi:\left[ 0,1\right] \rightarrow R^{1}$ belongs to the
class $D$ if it is right continious and has finite lower limits everywhere
on the segment $\left[ 0,1\right] $.} with the norm 
\begin{equation*}
\left\vert f\right\vert _{p}^{\left( d\right) }=\left\{ \mathbf{M}%
\sup_{0\leqslant t\leqslant1}\left\vert f\left( t,\omega\right) \right\vert
^{p}\right\} ^{1/p}; 
\end{equation*}

\item $CH_{p}$ the subset of $DH_{p}$ of all continuous by $t$ a.s.
functions $f\left( t,\omega\right) $;

\item $\Pi\left( \cdot\right) $ the $\sigma$-finite measure on $\mathfrak{B}%
^{n}$,

\begin{enumerate}
\item $E_{p}\left( \Pi\right) $ the set of all $\mathfrak{BF}\times 
\mathfrak{B}^{n}$-measurable functions $\psi\left( t,\omega,y\right) $,$%
\;t\in\left[ 0,1\right] $, $\omega\in\Omega$, $y\in R^{l}$ such that%
\begin{equation*}
\int\limits_{0}^{1}\int\left\vert \psi\left( t,\omega,y\right) \right\vert
^{p}\Pi\left( dy\right) <\infty\ a.s. 
\end{equation*}

\item $L_{p}\left( \Pi\right) $ the Banach space $\mathfrak{BF}\times%
\mathfrak{B}^{n}$-measurable functions $\psi\left( t,\omega,y\right) $
(classes of functions) with the norm%
\begin{equation*}
\left\Vert \psi\right\Vert _{L_{p}\left( \Pi\right) }=\left\{ \mathbf{M}%
\int\limits_{0}^{1}\left[ \int\left\vert \psi\right\vert ^{p}\Pi\left(
dy\right) +\left( \int\left\vert \psi\right\vert ^{2}\Pi\left( dy\right)
\right) ^{p/2}\right] \right\} ^{1/p}. 
\end{equation*}
\end{enumerate}
\end{enumerate}

We shall consider the embeddings of the type $DH_{p}\subset H_{p}\subset
L_{p}$. In this case an element of a slender space is identified with the
class of equivalent elements from the wider space. Conversely, if we state
that an element of a wide space belongs to a slender space it means that the
corresponding class of equivalent function includes an element from the
slender space. Note that $DH_{p}$ and $H_{p}$ are everywhere dense in $L_{p}$%
.

Further, $L_{p}\left( \Omega\right) $, $L_{p}$, $N_{p}$, $L_{p}\left(
\Pi\right) $ are the Banach ideal spaces of measurable functions with
order-type continuous norms (see \cite{V803}). Let $\mathcal{H}$ denotes any
of such space. As it was proved in \cite{V803} there exists a linear
isomorphism between the conjugate space $\mathcal{H}^{\ast}$ and the set $X_{%
\mathcal{H}}$ of all measurable functions $x$ (classes of $\mu$-equivalent
functions) for which%
\begin{equation*}
\int\left\vert x\varphi\right\vert \,d\mu<\infty\;\forall\varphi\in \mathcal{%
H}. 
\end{equation*}
This isomorphism has the form%
\begin{equation*}
X_{\mathcal{H}}\,\backepsilon\,x\longleftrightarrow f\left( \cdot\right)
=\int x\cdot\,d\mu\in\mathcal{H}^{\ast}. 
\end{equation*}
Using this fact, we can identify $\mathcal{H}^{\ast}$ with $X_{\mathcal{H}}$%
. Particularly,%
\begin{gather*}
L_{p}^{\ast}\equiv X_{L_{p}}=L_{p^{\prime}},\;N_{p}^{\ast}\equiv
X_{N_{p}}=N_{p^{\prime}}, \\
L_{2}^{\ast}\left( \Pi\right) \equiv X_{L_{2}\left( \Pi\right) }=L_{2}\left(
\Pi\right) ,
\end{gather*}
where $p>1$, $p^{\prime}=p/\left( p-1\right) .$

\subsection{Operators}

Let $w\left( t,\omega\right) $, $t\in\left[ 0,1\right] $ be a standard
Wiener process concurrent with the stream $\left\{ \mathfrak{F}_{t}\right\} $%
, having for $s>t$ independent of $\mathfrak{F}_{t}$ increments $w\left(
s\right) -w\left( t\right) ;$ let $\nu\left( t,A\right) $ be a random
Poisson measure on $R^{l+1}$with the parameter $\Pi\left( A\right) t$ ($%
\Pi\left( \cdot\right) $ is a $\sigma$-finite measure on $R^{l}$) concurrent
with $\left\{ \mathfrak{F}_{t}\right\} $, having for $s>t$ independent of $%
\mathfrak{F}_{t}$ increments $\nu\left( s,A\right) -\nu\left( t,A\right) $;
let $\tilde{\nu}\left( t,A\right) \equiv \nu\left( t,A\right) -\Pi\left(
A\right) t$ be the centered Poisson random measure.

Let us consider the following linear operations:%
\begin{gather}
\mathcal{L}\left( f\right) \left( t,\omega\right) \equiv\int
\limits_{0}^{t}f\left( s,\omega\right) \,ds,\;f\in E_{1},  \notag \\
\mathcal{J}\left( \varphi\right) \left( t,\omega\right) \equiv
\int\limits_{0}^{t}\varphi\left( s,\omega\right) \,dw\left( s,\omega \right)
,\ \varphi\in E_{2},  \label{F80f1} \\
\mathcal{P}\left( \alpha\right) \left( t,\omega\right) \equiv
\int\limits_{0}^{t}\int\alpha\left( s,\omega,y\right) \,\tilde{\nu}\left(
ds,dy\right) ,\ \alpha\in E_{2}\left( \Pi\right) .  \notag
\end{gather}
Integrals in (\ref{F80f1}) are interpreted respectively as the Lebesgue
integral over trajectories of random functions, the stochastic Ito integral
and the stochastic integral with respect to Poisson measure. Below we study
the operators generated by operations (\ref{F80f1}) on the above declared
spaces of random functions.

\subsection{Some preliminary facts}

Let us review some facts from the theory of martingales and stochastic
integrals \cite{V804}. Using the completeness of $\left( \Omega ,\mathfrak{F}%
,\mathbf{P}\right) $, we shall consider only separable modifications of
stochastic processes. Let $\mu\left( t\right) ,t\in\left[ 0,1\right] $ be a
martingale with respect to the stream $\left\{ \mathfrak{F}_{t}\right\} $%
\footnote{%
As all other martingales in this paper.}. As the stream is right-continuous,
then the process $\mu\left( t\right) $ has a modification with its
trajectories from $D$. If $\mathbf{M}\left\vert \mu\left( t\right)
\right\vert ^{p}<\infty$, $p\in\left( 1,\infty\right) $, then%
\begin{equation}
\mathbf{M}\sup_{0\leqslant t\leqslant1}\left\vert \mu\left( t\right)
\right\vert ^{p}\leqslant\left( p^{\prime}\right) ^{p}\sup_{0\leqslant
t\leqslant1}\mathbf{M}\left\vert \mu\left( t\right) \right\vert ^{p}=\left(
p^{\prime}\right) ^{p}\mathbf{M}\left\vert \mu\left( 1\right) \right\vert
^{p}.   \label{V80f2}
\end{equation}

Let $\mathcal{M}_{2}$ be the class of all martingales on $\left[ 0,1\right] $
with $\mathbf{M}\mu ^{2}\left( 1\right) <\infty $; $\mathcal{M}_{2}$ is a
Hilbert space with the inner product 
\begin{equation*}
\left( \mu _{1},\mu _{2}\right) =\mathbf{M}\mu _{1}\left( 1\right) \mu
_{2}\left( 1\right) .
\end{equation*}%
A closed subspaces in $\mathcal{M}_{2}$ form the sets of martingales of the
following type%
\begin{gather*}
\mathcal{M}_{2}^{w}=\left\{ \int\limits_{0}^{t}\varphi \,dw,\;\varphi \in
L_{2}\right\} , \\
\mathcal{M}_{2}^{\nu }=\left\{ \int\limits_{0}^{t}\int \psi \tilde{\nu}%
\left( \,ds,dy\right) ,\;\psi \in L_{2}\left( \Pi \right) \right\} .
\end{gather*}

Let $\varphi \in N_{p}$, $p\in \left( 1,\infty \right) ,\psi \in L_{2}\left(
\Pi \right) $. The following inequalities for the stochastic integrals are
well known:%
\begin{gather}
A_{p}^{p}\left\Vert \varphi \right\Vert _{N_{p}}^{p}\leqslant \mathbf{M}%
\left\vert \int_{0}^{1}\varphi \left( s\right) \,dw\left( s\right)
\right\vert ^{p}\leqslant B_{p}^{p}\left\Vert \varphi \right\Vert
_{N_{p}}^{p},  \label{V80f3} \\
\mathbf{M}\left\vert \int\limits_{0}^{1}\int \psi \left( s,y\right) \tilde{%
\nu}\left( \,ds,dy\right) \right\vert ^{2}=\mathbf{M}\int\limits_{0}^{1}\int
\psi ^{2}\left( s,y\right) \,ds\Pi \left( dy\right) .  \label{V80f4}
\end{gather}%
Here $A_{p},B_{p}$ are the positive values depending only on $p$, $%
A_{2}=B_{2}=1$. According to \cite{Novikov} $A_{p}$ has the following
property: $\forall r>2$ there exists $r_{1}<r$ such that $\tilde{A}%
_{r}\equiv \inf\limits_{p\in \left( r_{1},r\right) }A_{p}>0$. The right one
of the inequalities (\ref{V80f3}) holds for $p\in \left( 0,1\right] $ also.
Moreover, if $\varphi \in N_{1}$, then the random function $\mathcal{J}%
\left( \varphi \right) \left( t,\omega \right) $ is a martingale and $%
\mathbf{M}\mathcal{J}\left( \varphi \right) \left( t,\omega \right) =0$.

Let $\mathfrak{F}_{t}^{w}\equiv\overline{\sigma\left[ w\left( s\right)
,s\leqslant t\right] }$. Denote by $N_{p}\left( w\right) $ a subspace of $%
N_{p}$ of all random functions progressively measurable with respect to the
stream $\left\{ \mathfrak{F}_{t}^{w}\right\} $. If a random value $\xi\left(
\omega\right) $ is $\mathfrak{F}_{1}^{w}$-measurable and belongs to $%
L_{2}\left( \Omega\right) $ then there exists a function $\lambda\in
N_{2}\left( w\right) $ such that 
\begin{equation}
\xi\left( \omega\right) =\mathbf{M}\xi\left( \omega\right) +\int
\limits_{0}^{1}\lambda\left( s,\omega\right) \,dw\left( s,\omega\right) \ 
\text{~}a.s.   \label{V80f5}
\end{equation}
(Ito-Clark theorem). Using inequalities (\ref{V80f3}) we generalize formula (%
\ref{V80f5}) for a more wide class of random values.

\begin{theorem}
\label{T1} Let a random value $\xi\left( \omega\right) $ is\ $F_{1}^{w}$%
-measurable and $M\left\vert \xi\right\vert ^{p}<\infty$, where $p\in\left(
1,\infty\right) $. Then there exit a function $\lambda\in N_{p}\left(
w\right) $ such that the equality (\ref{V80f5}) is fulfilled. Moreover,%
\begin{equation}
A_{p}\left\Vert \lambda\right\Vert _{N_{p}}\leqslant\left\Vert \xi -\mathbf{M%
}\xi\right\Vert _{p}\leqslant B_{p}\left\Vert \lambda\right\Vert _{N_{p}}. 
\label{AB}
\end{equation}
In particular case, if a martingale $\mu\left( t\right) $ is concurrent with
the stream$\left\{ \mathfrak{F}_{t}^{w}\right\} $ and $M\left\vert \mu\left(
1\right) \right\vert ^{p}<\infty$, then $\mu\left( t\right) $ can be
identically represented in the form 
\begin{equation}
\mu\left( t\right) =\mathbf{M}\mu\left( t\right)
+\int\limits_{0}^{t}\lambda\left( s,\omega\right) \,dw\left( s,\omega\right)
,\;\lambda\in N_{p}\left( w\right) .   \label{V80f6}
\end{equation}
\end{theorem}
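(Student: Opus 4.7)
The plan is to reduce to the classical Ito--Clark representation~(\ref{V80f5}) by a truncation argument, then pass to the limit in $N_{p}(w)$ using the two-sided bound~(\ref{V80f3}).

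First I would truncate. Set $\xi _{n}=\xi \cdot \mathbf{1}_{\{|\xi |\leq n\}}$; each $\xi _{n}$ is bounded and $\mathfrak{F}_{1}^{w}$-measurable, so (\ref{V80f5}) yields $\lambda _{n}\in N_{2}(w)$ with $\xi _{n}=\mathbf{M}\xi _{n}+\int_{0}^{1}\lambda _{n}\,dw$ a.s. Since $\xi _{n}-\mathbf{M}\xi _{n}$ is bounded, it lies in $L_{p}(\Omega )$, and the left half of~(\ref{V80f3}) (valid whenever the stochastic integral is defined and its $p$-th moment is finite) forces $\lambda _{n}\in N_{p}(w)$. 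By dominated convergence, $\xi _{n}\to \xi $ in $L_{p}(\Omega )$, whence $\mathbf{M}\xi _{n}\to \mathbf{M}\xi $ since $\mathbf{P}$ is a probability measure. Applying~(\ref{V80f3}) to the difference $\lambda _{n}-\lambda _{m}$ gives
\[
A_{p}\,\Vert \lambda _{n}-\lambda _{m}\Vert _{N_{p}}\leq \bigl\Vert(\xi _{n}-\mathbf{M}\xi _{n})-(\xi _{m}-\mathbf{M}\xi _{m})\bigr\Vert_{L_{p}(\Omega )}\longrightarrow 0,
\]
so $\{\lambda _{n}\}$ is Cauchy in $N_{p}(w)$. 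Let $\lambda $ be its limit; progressive measurability with respect to $\{\mathfrak{F}_{t}^{w}\}$ survives because $N_{p}(w)$ is closed in $N_{p}$. The upper half of~(\ref{V80f3}) then yields $\int_{0}^{1}\lambda _{n}\,dw\to \int_{0}^{1}\lambda \,dw$ in $L_{p}(\Omega )$, while the left-hand sides tend to $\xi -\mathbf{M}\xi $, and (\ref{V80f5}) follows for $\xi $. The two-sided bound~(\ref{AB}) is then just~(\ref{V80f3}) applied to this limiting $\lambda $.

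For the martingale part, I would apply what has just been proved to $\xi =\mu (1)\in L_{p}(\Omega )$ to obtain $\mu (1)=\mathbf{M}\mu (1)+\int_{0}^{1}\lambda \,dw$ with $\lambda \in N_{p}(w)$. Taking $\mathbf{M}(\cdot \mid \mathfrak{F}_{t}^{w})$ on both sides, using the hypothesis that $\mu $ is an $\{\mathfrak{F}_{t}^{w}\}$-martingale (so $\mathbf{M}(\mu (1)\mid \mathfrak{F}_{t}^{w})=\mu (t)$ and $\mathbf{M}\mu (t)\equiv \mathbf{M}\mu (1)$) together with the martingale property of $\mathcal{J}(\lambda )$, one recovers~(\ref{V80f6}).

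The subtlest ingredient is the need for $A_{p}>0$ at every $p\in (1,\infty )$: for $p\leq 2$ this is elementary (indeed $N_{2}\subset N_{p}$ by Jensen, so the truncation argument lives inside $N_{2}$ throughout), but for $p>2$ the positivity of $A_{p}$ is nontrivial and is what makes both the a~priori inclusion $\lambda _{n}\in N_{p}$ and the Cauchy estimate above possible; this is precisely the content of the Burkholder--Davis--Gundy-type bound stated after~(\ref{V80f3}) and attributed to Novikov. Everything else---truncation, closedness of $N_{p}(w)$ in $N_{p}$, and the conditioning step in the martingale case---is routine.
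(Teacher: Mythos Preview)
Your argument is the natural modern one, and with the full Burkholder--Davis--Gundy inequality it is correct. But relative to the paper's framework it has a gap, and your last paragraph misidentifies where that gap lies. The crucial step is your claim that, since $\xi_n - \mathbf{M}\xi_n$ is bounded, ``the left half of~(\ref{V80f3}) forces $\lambda_n \in N_p(w)$.'' In the paper, (\ref{V80f3}) is stated \emph{under the hypothesis} $\varphi \in N_p$; it does not assert the a~priori form you need (that if $\varphi \in N_2$ and $\mathbf{M}\bigl|\int_0^1 \varphi\,dw\bigr|^p < \infty$, then $\varphi \in N_p$). The remark after~(\ref{V80f3}) attributed to Novikov concerns only the uniform positivity $\tilde A_r = \inf_{p \in (r_1,r)} A_p > 0$, not the a~priori validity of the lower bound. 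In fact, the a~priori statement you want is exactly the second Corollary \emph{following} Theorem~\ref{T1}, so invoking it here is circular.

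The paper fills this gap with a bootstrap. Lemma~\ref{PP} is essentially your Cauchy/limit argument, but run at an exponent $r$ \emph{below} one already established, so that $\lambda_n \in N_p \subset N_r$ is automatic: it gives $P(p)\Rightarrow P(r)$ for all $r<p$. The hard upward step is Lemma~\ref{Pr}: assuming only $\xi \in L_r$ and $\lambda \in N_{r-2}$ (for $r\geqslant 4$), It\^o's formula for $\eta^2$ and $\eta^4$ together with an induction on exponents $p_n = r - 2^{-n}$ squeezes $\lambda$ into each $N_{p_n}$; the Novikov uniform positivity $\tilde A_r>0$ is what actually gets used, to pass to the limit $p_n\uparrow r$. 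Starting from It\^o--Clark ($p=2$), Lemma~\ref{Pr} yields $P(4),P(6),\ldots$, and Lemma~\ref{PP} then fills in all $p\in(1,\infty)$. If you import the full BDG lower bound for continuous local martingales from outside the paper, your truncation is a genuine and cleaner shortcut; within the paper's setup, the step ``$\lambda_n \in N_p$'' is precisely the missing idea that Lemma~\ref{Pr} supplies.
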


\begin{proof}
\textsf{\ }Denote by $P\left( p\right) $ the statement claiming that the
theorem holds for a given $p$. First prove the following two statements.

\begin{lemma}
\label{PP}$P\left( p\right) \Rightarrow P\left( r\right) $ $\forall
r\in\left( 1,p\right) .$
\end{lemma}

\begin{proof}
Let us use the fact that $L_{p}\left( w\right) $ is everywhere dense in $%
L_{r}\left( w\right) $ for $p>r$. Let $\xi\in L_{r}\left( w\right) $, $%
\left\{ \xi_{n}\right\} \subset L_{p}\left( w\right) $ and $\left\Vert
\xi-\xi_{n}\right\Vert _{r}\rightarrow0$ as $n\rightarrow\infty.$ Also let $%
\left\{ \lambda_{n}\right\} \subset N_{p}\left( w\right) \subset$ $%
N_{r}\left( w\right) $ be a sequence of functions from (\ref{V80f5})
corresponding to the sequence $\left\{ \xi_{n}\right\} $. According to (\ref%
{V80f3}) 
\begin{equation*}
A_{r}\left\Vert \lambda_{n}-\lambda_{m}\right\Vert
_{N_{r}}\leqslant\left\Vert \xi_{n}-\mathbf{M}\xi_{n}-\xi_{m}+\mathbf{M}%
\xi_{m}\right\Vert _{r}\leqslant2\left\Vert \xi_{n}-\xi_{m}\right\Vert
_{r}\rightarrow0 
\end{equation*}
as $n,m\rightarrow\infty$. We see that the sequence $\left\{ \lambda
_{n}\right\} $ is fundamental in $N_{r}\left( w\right) $; therefore, it
converges to some element $\lambda\in N_{r}\left( w\right) $. Let $\tilde{\xi%
}\equiv\mathbf{M}\xi+\mathcal{J}\left( \lambda\right) \left( 1\right) $.
Using the right inequality in (\ref{V80f3}), we get 
\begin{gather*}
\left\Vert \xi-\tilde{\xi}\right\Vert _{r}\leqslant\left\Vert \xi-\xi
_{n}\right\Vert _{r}+\left\Vert \xi_{n}-\tilde{\xi}\right\Vert _{r}, \\
\left\Vert \xi_{n}-\tilde{\xi}\right\Vert _{r}\leqslant\left\Vert \mathbf{M}%
\left( \xi_{n}-\tilde{\xi}\right) \right\Vert _{r}+\left\Vert
\int\limits_{0}^{1}\left( \lambda_{n}-\lambda\right) dw\right\Vert
_{r}\leqslant \\
\leqslant\left\Vert \xi_{n}-\xi\right\Vert _{r}+B_{r}\left\Vert \lambda
_{n}-\lambda\right\Vert _{N_{r}}.
\end{gather*}
Taking limits in these inequalities as $n\rightarrow\infty$, we receive $\xi=%
\tilde{\xi}$ a.s. That proves Lemma \ref{PP}. The uniqueness of the
representation (\ref{V80f5}) can be easily received from the left of
inequalities in (\ref{V80f3})
\end{proof}

\begin{lemma}
\textit{\ \label{Pr}Let }$r\geqslant4$\textit{\ and for every }$\xi\in
L_{r}\left( w\right) $\textit{\ there exists a function }$\lambda\in
N_{r-2}\left( w\right) $\textit{\ such that (\ref{V80f5}) holds. Then }$%
P\left( r\right) $\textit{.}
\end{lemma}

\begin{proof}
Let take the martingale $\eta\left( t\right) =\mathcal{J}\left(
\lambda\right) \left( t\right) $, $\,\eta\left( 1\right) =\xi$. Applying
formula Ito, we obtain for all $t\in\left[ 0,1\right] $ 
\begin{align}
\eta^{2}\left( t\right) & =\int\limits_{0}^{t}\lambda^{2}\,ds+2\int
\limits_{0}^{t}\eta\lambda\,dw,  \label{eta2} \\
\eta^{4}\left( t\right) & =6\int\limits_{0}^{t}\eta^{2}\lambda
^{2}\,ds+4\int\limits_{0}^{t}\eta^{3}\lambda\,dw\ \ a.s.   \label{eta4}
\end{align}
Prove that $\lambda\in N_{p_{n}}$, $p_{n}=r-2^{-n}$, $n=-1,0,1,2,\ldots$ For 
$n=-1$ this statement follows from the assumption of Lemma \ref{Pr}. Let us
prove that from realizability of the lemma for $n=k-1$, $k\geq0$ the
realizability of it for $n=k$ follows. Then, using the induction on $n$ we
shall prove Lemma \ref{Pr}.

Using H\"{o}lder inequality and (\ref{V80f2}), we receive%
\begin{gather*}
\mathbf{M}\left\vert \int\limits_{0}^{t}\eta^{6}\lambda^{2}\,ds\right\vert
^{p_{k}/8}\leqslant\mathbf{M}\sup\limits_{0\leqslant t\leqslant1}\left\vert
\eta\left( t\right) \right\vert ^{3p_{k}/4}\left(
\int\limits_{0}^{t}\lambda^{2}\,ds\right) ^{p_{k}/8}\leqslant \\
\leqslant\left( \frac{d_{k}}{d_{k}-1}\right) ^{3p_{k}/4}\left( \mathbf{M}%
\left\vert \xi\right\vert ^{d_{k}}\right) ^{3p_{k}/4d_{k}}\left( \left( 
\mathbf{M}\int\limits_{0}^{t}\lambda^{2}\,ds\right) ^{p_{k-1}/2}\right)
^{p_{k}/4p_{k-1}}<\infty, \\
d_{k}\equiv\frac{3p_{k}p_{k-1}}{4p_{k-1}-p_{k}}\leqslant r,\;k=0,1,2\ldots
\end{gather*}
Thus, due to (\ref{V80f3}) and (\ref{eta4}) we get 
\begin{equation*}
\mathbf{M}\left\vert \int\limits_{0}^{1}\eta^{3}\lambda dw\right\vert
^{p_{k}/4}\leqslant B_{p_{k}/4}^{p_{k}/4}\mathbf{M}\left(
\int\limits_{0}^{1}\eta^{6}\lambda^{2}ds\right) ^{p_{k}/8}, 
\end{equation*}%
\begin{equation*}
\mathbf{M}\left\vert 6\int\limits_{0}^{1}\eta^{2}\lambda^{2}ds\right\vert
^{p_{k}/4}\leqslant\mathbf{M}\left\vert \xi^{4}-4\int\limits_{0}^{1}\eta
^{3}\lambda dw\right\vert ^{p_{k}/4}<\infty. 
\end{equation*}
Next using (\ref{eta2}) and (\ref{V80f3}), we get%
\begin{equation*}
\begin{array}{c}
\mathbf{M}\left\vert \int_{0}^{1}\lambda^{2}ds\right\vert ^{p_{k}/2}\leqslant
\\ 
\leqslant2^{p_{k}/2-1}\left[ \mathbf{M}\left\vert \xi\right\vert
^{p_{k}}+2^{p_{k}/2}B_{p_{k}/2}^{p_{k}/2}\mathbf{M}\left( \int_{0}^{1}\eta
^{2}\lambda^{2}ds\right) ^{p_{k}/4}\right] <\infty,%
\end{array}
\end{equation*}
that proves the statement. Therefore, $\lambda\in N_{p_{n}}$ and according
to (\ref{V80f3}) 
\begin{equation*}
A_{p_{n}}\left\Vert \lambda\right\Vert _{N_{p_{n}}}\leqslant2\left\Vert
\xi\right\Vert _{p_{n}}\leqslant2\left\Vert \xi\right\Vert _{r}. 
\end{equation*}
Taking in the left-hand side of the equality the limit as $n\rightarrow\infty
$ and using of lower continuity of function $p\rightarrow\left\Vert
\lambda\right\Vert _{N_{p}}$\cite{V807}, we obtain 
\begin{equation*}
\tilde{A}_{r}\left\Vert \lambda\right\Vert _{N_{r}}\leqslant2\left\Vert
\xi\right\Vert _{r}, 
\end{equation*}
so, $\lambda\in N_{r}$ . This proves Lemma \ref{Pr}
\end{proof}

Using of the lemmas, we can now prove Theorem \ref{T1}. Really, from Lemma %
\ref{Pr} we have: $P\left( p\right) \Rightarrow P\left( p+2\right) $ $%
\forall p\geqslant4.$ But $P\left( 4\right) $ follows from Ito-Clark theorem
(\ref{V80f5}). So, for all $n=2,3,\ldots$ the statement $P\left( 2n\right) $
holds. Using of Lemma \ref{PP}, we receive $P\left( r\right) $ for all $%
r\in\left( 1,2n\right) $ that is equivalent to the first statement of
Theorem \ref{T1}. For receiving of the inequalities (\ref{AB}) it is
sufficiently to apply the inequalities (\ref{V80f3}) to (\ref{V80f5}). To
get (\ref{V80f6}) it is sufficiently to represent $\mu\left( 1\right) $ in
the form (\ref{V80f5}). Then%
\begin{gather*}
\mu\left( t\right) =\mathbf{M}\left\{ \left. \mu\left( 1\right) \right\vert 
\mathfrak{F}_{t}\right\} =\mathbf{M}\mu\left( 1\right) +\mathbf{M}\left\{
\left. \int\limits_{0}^{1}\varphi\,\left( s\right) dw\left( s\right)
\right\vert \mathfrak{F}_{t}\right\} = \\
=\mathbf{M}\mu\left( 1\right) +\int\limits_{0}^{t}\varphi\,\left( s\right)
dw\left( s\right) .
\end{gather*}
This concludes the proof of Theorem \ref{T1}
\end{proof}

\begin{corollary}
\textit{\ Let }$\mathbf{M}\left\vert \xi\right\vert ^{r}<\infty$\textit{\
and }$\mathbf{M}\left\vert \xi\right\vert
^{r+\epsilon}=\infty\forall\epsilon >0$\textit{. Then in (\ref{V80f5}) the
function }$\lambda\in N_{r}\left( w\right) $\textit{\ but }$\lambda\not \in
N_{r+\epsilon}\left( w\right) $\textit{, }$\epsilon>0$.
\end{corollary}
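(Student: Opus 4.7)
The plan is straightforward: the corollary is essentially a sharpness statement for Theorem~\ref{T1}, and both halves follow by applying the inequalities (\ref{AB}) (equivalently, the Burkholder-type inequalities (\ref{V80f3})) in the two directions.

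First I would dispose of the easy containment. Since $\mathbf{M}|\xi|^r < \infty$ and $\xi$ is $\mathfrak{F}_1^w$-measurable, Theorem~\ref{T1} applied with $p=r$ produces a function $\lambda \in N_r(w)$ with $\xi = \mathbf{M}\xi + \mathcal{J}(\lambda)(1)$ a.s., and by uniqueness (noted at the end of the proof of Lemma~\ref{PP}) this is the same $\lambda$ as in (\ref{V80f5}).

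For the non-containment I would argue by contradiction. Suppose $\lambda \in N_{r+\varepsilon}(w)$ for some $\varepsilon>0$. Applying the right inequality in (\ref{V80f3}) (equivalently the right half of (\ref{AB}) supplied by Theorem~\ref{T1}) with $p = r+\varepsilon$ to $\mathcal{J}(\lambda)(1) = \xi - \mathbf{M}\xi$ gives
\begin{equation*}
\mathbf{M}|\xi-\mathbf{M}\xi|^{r+\varepsilon} \leqslant B_{r+\varepsilon}^{\,r+\varepsilon}\|\lambda\|_{N_{r+\varepsilon}}^{\,r+\varepsilon} < \infty.
\end{equation*}
Since $\mathbf{M}|\xi|^r<\infty$ (with $r>1$) ensures that the constant $\mathbf{M}\xi$ is finite, the elementary estimate $|\xi|^{r+\varepsilon} \leqslant 2^{r+\varepsilon-1}\bigl(|\xi-\mathbf{M}\xi|^{r+\varepsilon} + |\mathbf{M}\xi|^{r+\varepsilon}\bigr)$ then yields $\mathbf{M}|\xi|^{r+\varepsilon}<\infty$, contradicting the hypothesis.

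There is no real obstacle here; the only thing to be careful about is not to confuse the two halves of (\ref{V80f3})/(\ref{AB}). The left inequality says $\xi\in L_p \Rightarrow \lambda\in N_p$ and is what gives the positive statement $\lambda\in N_r(w)$; the right inequality says $\lambda\in N_p \Rightarrow \xi\in L_p$ and is precisely the tool needed to transfer the non-integrability hypothesis on $\xi$ to a non-integrability conclusion on $\lambda$.
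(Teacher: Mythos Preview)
Your argument is correct and is exactly the natural elaboration the paper has in mind: the corollary is stated without a separate proof, as an immediate consequence of Theorem~\ref{T1} and the two-sided estimate (\ref{AB})/(\ref{V80f3}), and you have filled in those details accurately, including the correct identification of which half of the inequality drives each conclusion.
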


\begin{corollary}
\textit{Let }$\mu\left( t\right) =\mathcal{J}\left( \lambda\right) \left(
t\right) $,\textit{\ where }$\lambda\in E_{2}$\textit{, the process }$%
\mu\left( t\right) $ is concurrent with the stream\textit{\ }$\left\{ 
\mathfrak{F}_{t}^{w}\right\} $\textit{\ and }$\mathbf{M}\sup
\limits_{0\leqslant t\leqslant1}\left\vert \eta\left( t\right) \right\vert
^{p}<\infty$\textit{\ for some }$p\in\left( 1,\infty\right) $\textit{. Then }%
$\mu\left( t\right) $ is a martingale and $\left\Vert \lambda\right\Vert
_{N_{p}}<\infty$\textit{.}
\end{corollary}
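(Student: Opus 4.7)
The plan is to reduce the assertion about $\lambda\in E_{2}$ to the left half of the Burkholder-type inequality (\ref{V80f3}) by a standard localization. Since $\lambda$ is only assumed to lie in $E_{2}$, $\mathcal{J}(\lambda)$ is a priori merely a local martingale; I would first truncate $\lambda$ so as to force it into $N_{p}$, apply (\ref{V80f3}) to each truncation, and then remove the cut-off by monotone convergence.

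Concretely, I would introduce the stopping times
\[
\tau_{n}=\inf\Bigl\{t\in[0,1]:\int_{0}^{t}\lambda^{2}(s,\omega)\,ds\geqslant n\Bigr\}\wedge 1,
\]
which, because $\lambda\in E_{2}$, satisfy $\tau_{n}\uparrow 1$ a.s. The truncations $\lambda_{n}\equiv\lambda\cdot\mathbf{1}_{[0,\tau_{n}]}$ obey $\int_{0}^{1}\lambda_{n}^{2}\,ds\leqslant n$ pathwise, hence $\lambda_{n}\in N_{q}$ for every $q\in[1,\infty)$; moreover $\mathcal{J}(\lambda_{n})(t)=\mu(t\wedge\tau_{n})$ is an honest square-integrable martingale with terminal value $\mu(\tau_{n})$. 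Applying the left half of (\ref{V80f3}) to $\lambda_{n}$ and bounding $|\mu(\tau_{n})|$ pointwise by $\sup_{0\leqslant t\leqslant 1}|\mu(t)|$ yields
\[
A_{p}^{p}\|\lambda_{n}\|_{N_{p}}^{p}\leqslant\mathbf{M}|\mu(\tau_{n})|^{p}\leqslant\mathbf{M}\sup_{0\leqslant t\leqslant 1}|\mu(t)|^{p}<\infty.
\]
Because $\int_{0}^{1}\lambda_{n}^{2}\,ds\uparrow\int_{0}^{1}\lambda^{2}\,ds$ monotonically, Beppo Levi then gives $\|\lambda\|_{N_{p}}\leqslant A_{p}^{-1}\bigl(\mathbf{M}\sup_{t}|\mu(t)|^{p}\bigr)^{1/p}<\infty$.

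The martingale claim is then immediate: by Jensen's inequality applied to the convex map $y\mapsto y^{p}$ with $p>1$, one has $\|\lambda\|_{N_{1}}\leqslant\|\lambda\|_{N_{p}}<\infty$, so $\lambda\in N_{1}$, and the remark recalled just after (\ref{V80f4}) guarantees that $\mathcal{J}(\lambda)$ is a true martingale. I expect the only genuine obstacle to be the localization bookkeeping: one must check that the $\tau_{n}$ both reduce $\mathcal{J}(\lambda)$ to actual martingales (which uses $\lambda\in E_{2}$) and leave the right-hand side of the Burkholder estimate uniformly bounded in $n$ (which is where the hypothesis $\mathbf{M}\sup_{t}|\mu(t)|^{p}<\infty$ enters). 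Both requirements are built into the choice of $\tau_{n}$.
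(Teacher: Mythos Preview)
Your localization argument is correct and complete: the stopping times $\tau_n$ force $\lambda_n\in N_p$, the left inequality in (\ref{V80f3}) applies, the bound $|\mu(\tau_n)|\leqslant\sup_t|\mu(t)|$ makes the right side uniform in $n$, and monotone convergence closes the estimate; the martingale property then follows from $\lambda\in N_p\subset N_1$ exactly as you say.

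The paper gives no proof, but the placement as a corollary of Theorem~\ref{T1} indicates a different intended route: since $\mu(1)$ is $\mathfrak{F}_1^{w}$-measurable (this is where the hypothesis that $\mu$ is concurrent with $\{\mathfrak{F}_t^{w}\}$ enters) and $\mathbf{M}|\mu(1)|^{p}\leqslant\mathbf{M}\sup_t|\mu(t)|^{p}<\infty$, Theorem~\ref{T1} produces $\tilde\lambda\in N_p(w)$ with $\mu(1)=\mathbf{M}\mu(1)+\int_0^1\tilde\lambda\,dw$; one then argues that $\mu$ is a true martingale (local martingale dominated by $\sup_t|\mu(t)|\in L_1$), so $\mathbf{M}\mu(1)=0$, and identifies $\tilde\lambda$ with $\lambda$ by uniqueness of the It\^o integrand. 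Your approach is more elementary in that it bypasses Theorem~\ref{T1} altogether and uses only (\ref{V80f3}); it also shows that the concurrency hypothesis with $\{\mathfrak{F}_t^{w}\}$ is in fact unnecessary for the conclusion as stated. The Theorem~\ref{T1} route, on the other hand, yields the slightly sharper information $\lambda\in N_p(w)$ rather than merely $\lambda\in N_p$.
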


\begin{lemma}
\label{L2}\textsf{\ }\textit{Let }$\varphi,\varkappa\in N_{p}$\textit{, }$p>1
$\textit{. Then}%
\begin{equation*}
\mathbf{M}\int\limits_{0}^{t}\varphi\,\left( s\right) dw\left( s\right)
\int\limits_{0}^{t}\varkappa\,\left( s\right) dw\left( s\right) =\mathbf{M}%
\int\limits_{0}^{t}\varkappa\,\left( s\right) \varphi\,\left( s\right)
\,ds,\;t\in\left[ 0,1\right] . 
\end{equation*}
\end{lemma}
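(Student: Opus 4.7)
The plan is to derive the identity by polarization from the diagonal case, which is the Ito isometry. Using linearity of $\mathcal{J}$ and bilinearity of the product,
\begin{equation*}
4\,\mathcal{J}(\varphi)(t)\mathcal{J}(\varkappa)(t) = \bigl(\mathcal{J}(\varphi+\varkappa)(t)\bigr)^{2} - \bigl(\mathcal{J}(\varphi-\varkappa)(t)\bigr)^{2}, \qquad 4\,\varphi\varkappa = (\varphi+\varkappa)^{2} - (\varphi-\varkappa)^{2},
\end{equation*}
so the asserted identity follows from the quadratic statement
\begin{equation*}
\mathbf{M}\bigl(\mathcal{J}(\psi)(t)\bigr)^{2} = \mathbf{M}\int_{0}^{t} \psi^{2}(s)\,ds
\end{equation*}
applied to $\psi = \varphi \pm \varkappa$. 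This quadratic statement is exactly the equality case of (\ref{V80f3}) at $p=2$ (where $A_{2}=B_{2}=1$), applied to $\psi\,\mathbf{1}_{[0,t]}$, and holds whenever $\psi\in N_{2}$.

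It therefore remains only to verify that $\varphi\pm\varkappa$ lie in $N_{2}$. For $p\geq 2$ this is immediate: writing $\|\psi\|_{N_{q}} = \bigl\|(\int_{0}^{1}\psi^{2}\,ds)^{1/2}\bigr\|_{L_{q}(\Omega)}$ and using the standard monotonicity of $L_{q}(\Omega)$-norms in $q$ on a probability space, one gets the embedding $N_{p}\subset N_{2}$ whenever $p\geq 2$, so $\varphi\pm\varkappa\in N_{2}$ and the polarization step is direct.

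The main obstacle is the range $p\in(1,2)$, where $N_{p}\not\subset N_{2}$ and the two sides of the identity need not be absolutely convergent individually. I would handle this by approximation: truncate $\varphi$ and $\varkappa$ to bounded step processes $\varphi_{n},\varkappa_{n}$ with $\varphi_{n}\to\varphi$, $\varkappa_{n}\to\varkappa$ in $N_{p}$ (dominated convergence applied to $\int_{0}^{1}\psi^{2}\,ds$); each such truncation lies in $N_{2}$, so the identity holds for each pair by the preceding case, and passage to the limit on the left-hand side uses (\ref{V80f3}) to get $\mathcal{J}(\varphi_{n})(t)\to \mathcal{J}(\varphi)(t)$ in $L_{p}(\Omega)$. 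The technical heart of the argument is then the convergence $\mathbf{M}\int_{0}^{t}\varphi_{n}\varkappa_{n}\,ds \to \mathbf{M}\int_{0}^{t}\varphi\varkappa\,ds$ on the right: a clean bound via Cauchy--Schwarz in $t$ combined with H\"{o}lder in $\omega$ with conjugate exponents $p,p'$ requires $N_{p'}$-regularity of at least one factor, which is strictly stronger than $N_{p}$-regularity when $p<2$, and this is where the argument will demand the most care.
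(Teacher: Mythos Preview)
Your polarization route is different from the paper's. The paper does not reduce to the diagonal Ito isometry; it applies Ito's formula directly to the product $\eta_1(t)\eta_2(t)$ (with $\eta_1=\mathcal{J}(\varphi)$, $\eta_2=\mathcal{J}(\varkappa)$) to get
\[
\eta_1(t)\eta_2(t)=\int_0^t\eta_1\varkappa\,dw+\int_0^t\eta_2\varphi\,dw+\int_0^t\varphi\varkappa\,ds,
\]
and then shows the two stochastic integrals are true martingales (hence have mean zero) by verifying $\eta_1\varkappa,\eta_2\varphi\in N_1$. The key estimate is
\[
\mathbf{M}\Bigl(\int_0^1\eta_1^2\varkappa^2\,ds\Bigr)^{1/2}\leqslant\bigl(\mathbf{M}\sup_t|\eta_1(t)|^p\bigr)^{1/p}\,\|\varkappa\|_{N_{p'}},
\]
with the first factor controlled by (\ref{V80f2}) and (\ref{V80f3}). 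No truncation or limiting argument is needed.

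For $p\in(1,2)$ your proposal has a genuine gap, and not only where you locate it. You flag the difficulty on the right-hand side, but the same obstruction already hits the left-hand limit: from $\mathcal{J}(\varphi_n)(t)\to\mathcal{J}(\varphi)(t)$ and $\mathcal{J}(\varkappa_n)(t)\to\mathcal{J}(\varkappa)(t)$ in $L_p(\Omega)$ with $p<2$ you get convergence of the product only in $L_{p/2}(\Omega)$, not in $L_1(\Omega)$, so the convergence $\mathbf{M}[\mathcal{J}(\varphi_n)\mathcal{J}(\varkappa_n)]\to\mathbf{M}[\mathcal{J}(\varphi)\mathcal{J}(\varkappa)]$ is unjustified. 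Polarization is inherently an $N_2$ device and cannot be rescued by approximation here.

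One further remark: the paper's bound above actually uses $\|\varkappa\|_{N_{p'}}$, and the only later application of this lemma (item~(c) of Lemma~\ref{ad}) pairs $N_r$ with $N_{r'}$. So the intended hypothesis is evidently $\varphi\in N_p$, $\varkappa\in N_{p'}$. Under that reading polarization is still unavailable---for $p\neq 2$ one of $\varphi,\varkappa$ need not lie in $N_2$, so $\varphi\pm\varkappa\notin N_2$ in general---whereas the Ito-formula argument goes through directly. That asymmetric handling of the two factors is precisely what the product-rule approach buys.
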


\begin{proof}
Denote by 
\begin{align*}
\eta_{1}\left( t\right) & \equiv\int\limits_{0}^{t}\varphi\,\left( s\right)
dw\left( s\right) ,\; \\
\eta_{2}\left( t\right) & \equiv\int\limits_{0}^{t}\varkappa\,\left(
s\right) dw\left( s\right) .
\end{align*}
Using Ito's formula we have%
\begin{equation*}
\eta_{1}\left( t\right) \eta_{2}\left( t\right)
=\int\limits_{0}^{t}\eta_{1}\varkappa\,dw+\int\limits_{0}^{t}\eta_{2}\varphi
dw+\int \limits_{0}^{t}\varphi\varkappa ds. 
\end{equation*}
Now we must only prove that%
\begin{equation}
\mathbf{M}\int\limits_{0}^{t}\eta_{1}\varkappa\,dw=\mathbf{M}\int
\limits_{0}^{t}\eta_{2}\varphi\,dw=0.   \label{V80f8}
\end{equation}
Applying to $\eta_{1}\left( t\right) $ the inequalities (\ref{V80f2}), (\ref%
{V80f3}) we receive%
\begin{equation*}
\mathbf{M}\sup_{0\leqslant t\leqslant1}\left\vert \eta_{1}\left( t\right)
\right\vert ^{p}\leqslant\left( p^{\prime}\right) ^{p}\mathbf{M}\left\vert
\eta\left( 1\right) \right\vert ^{p}\leqslant\left( p^{\prime}\right)
^{p}B_{p}^{p}\left\Vert \varphi\right\Vert _{N_{p}}^{p}<\infty. 
\end{equation*}
Hence, 
\begin{gather*}
M\left( \int_{0}^{1}\left\vert \eta_{1}\varkappa\right\vert ^{2}ds\right)
^{1/2}\leqslant\mathbf{M}\sup_{0\leqslant t\leqslant1}\left\vert \eta
_{1}\left( t\right) \right\vert \left( \int_{0}^{1}\left\vert
\varkappa\right\vert ^{2}ds\right) ^{1/2}\leqslant \\
\leqslant\left( \mathbf{M}\sup_{0\leqslant t\leqslant1}\left\vert \eta
_{1}\left( t\right) \right\vert ^{p}\right) ^{1/p}\left\Vert \varkappa
\right\Vert _{N_{p^{\prime}}}<\infty,
\end{gather*}
so, $\eta_{1}\varkappa\in N_{1}$. Analogously, $\eta_{2}\varphi\in N_{1}$.
This proves (\ref{V80f8}) and the lemma
\end{proof}

\section{Operator\ $\mathcal{L}$}

\subsection{Definition of operator $\mathcal{L}$}

Let $f\in L_{1}$. Random functions $\mathcal{L}\left( \tilde{f}\right)
\left( t,\omega\right) $ corresponding to different members $\tilde{f}$ of
the class $f$ are continuous a.s. and indistinguishable from the each other.
So, we can define the operator $\mathcal{L}:$ $L_{p}\rightarrow CH_{p}.$

\begin{lemma}
\label{L} \textit{The linear operator }$\mathcal{L}:$\textit{\ }$%
L_{p}\rightarrow CH_{p}$\textit{, }$p\in\left[ 1,\infty\right) $\textit{\ is
bounded.}
\end{lemma}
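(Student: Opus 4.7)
The plan is to show the norm inequality $|\mathcal{L}(f)|_p^{(d)} \leq \|f\|_p$ with constant $1$, so no delicate constants are needed; the proof is essentially a pointwise-in-$\omega$ estimate followed by H\"older's inequality in the time variable.

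First I would fix $f \in L_p$ and a representative of the class having $\mathfrak{BF}$-measurable trajectories. For each $\omega$ (outside the null set where $\int_0^1|f(s,\omega)|^p\,ds = \infty$), the elementary bound
\begin{equation*}
\sup_{0\leqslant t\leqslant 1}\left|\int_0^t f(s,\omega)\,ds\right| \leqslant \int_0^1|f(s,\omega)|\,ds
\end{equation*}
holds. Continuity of the trajectories of $\mathcal{L}(f)$ was already noted in the paragraph preceding the lemma, so $\mathcal{L}(f)$ is a legitimate element of $CH_p$ once we verify the norm is finite.

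Next I would apply H\"older's inequality on $[0,1]$ with exponents $p$ and $p' = p/(p-1)$ (in the case $p=1$ the step is trivial), yielding
\begin{equation*}
\int_0^1|f(s,\omega)|\,ds \leqslant \left(\int_0^1|f(s,\omega)|^p\,ds\right)^{1/p}.
\end{equation*}
Raising to the $p$-th power, combining with the previous display, and integrating against $\mathbf{P}$ gives
\begin{equation*}
\mathbf{M}\sup_{0\leqslant t\leqslant 1}\left|\mathcal{L}(f)(t,\omega)\right|^p \leqslant \mathbf{M}\int_0^1|f(s,\omega)|^p\,ds = \|f\|_p^p,
\end{equation*}
that is, $|\mathcal{L}(f)|_p^{(d)} \leqslant \|f\|_p$. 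Linearity is obvious from the linearity of the Lebesgue integral, so boundedness follows.

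There is no real obstacle here; the only thing worth double-checking is that the chosen modification of $\mathcal{L}(f)$ genuinely lies in $CH_p$, but this is taken care of by the remark immediately before the lemma, which identifies $\mathcal{L}$ as an operator into $CH_p$ precisely because different representatives of $f$ produce indistinguishable continuous trajectories.
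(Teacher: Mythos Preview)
Your proof is correct and follows essentially the same approach as the paper: a pointwise-in-$\omega$ estimate using H\"older's inequality in the time variable, then taking expectation to obtain $|\mathcal{L}(f)|_p^{(d)}\leqslant\|f\|_p$. The paper merely compresses your two displayed steps into the single inequality $\sup_{0\leqslant t\leqslant 1}\bigl|\int_0^t f(s)\,ds\bigr|^p\leqslant\int_0^1|f(s)|^p\,ds$ before averaging.
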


\begin{proof}
Using of H\"{o}lder inequality, we get the inequality%
\begin{equation*}
\sup_{0\leqslant t\leqslant1}\left\vert \int\limits_{0}^{t}f\left( s\right)
\,ds\right\vert ^{p}\leqslant\int\limits_{0}^{1}\left\vert f\left( s\right)
\right\vert ^{p}\,ds. 
\end{equation*}
By averaging of it by the measure $\mathbf{P}$ we then receive%
\begin{equation*}
\left\vert \mathcal{L}\left( f\right) \right\vert _{p}^{\left( d\right)
}\leqslant\left\Vert f\right\Vert _{p}
\end{equation*}
that completes the proof
\end{proof}

\subsection{Conjugate operator $\mathcal{L}^{\ast}$}

Denote by $\mathcal{L}^{\ast}:$ $\left( CH_{p}\right) ^{\ast}\rightarrow
L_{p}^{\ast}$ conjugate to the operator $\mathcal{L}$. We'll obtain the
explicit representation for the restriction of $\mathcal{L}^{\ast}$ on a
subspace $L_{p^{\prime}}\equiv\left( L_{p}\right) ^{\ast}$ of the space $%
\left( CH_{p}\right) ^{\ast}$.

\begin{theorem}
\textit{Let }$f\in L_{p^{\prime}}$\textit{. Then there exist selections of
conditional expectations such that}%
\begin{equation}
\mathcal{L}^{\ast}\left( f\right) \left( t,\omega\right) =\mathbf{M}\left\{
\left. \int\limits_{t}^{1}f\left( s,\omega\right) \,ds\right\vert \mathfrak{F%
}_{t}\right\} .   \label{V80f9}
\end{equation}
\textit{More precisely: all }$BF$\textit{-measurable functions from the
right-hand side of (\ref{V80f9}) belong to the class }$L^{\ast}\left(
f\right) $\textit{.}
\end{theorem}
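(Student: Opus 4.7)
The plan is to compute the dual pairing $\langle \mathcal{L}^{\ast}(f), g\rangle$ on arbitrary test functions $g\in L_{p}$ by unfolding the definitions, applying Fubini, and then absorbing the anti-adapted integrand into a conditional expectation using the progressive measurability of $g$. The main work lies not in the calculation itself but in producing a $\mathfrak{BF}$-measurable version of the resulting conditional expectation process.

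First I would use $L_{p}^{\ast}\equiv L_{p'}$ and the definition of $\mathcal{L}^{\ast}$ to write
\begin{equation*}
\langle \mathcal{L}^{\ast}(f),g\rangle \;=\; \langle f,\mathcal{L}(g)\rangle \;=\; \mathbf{M}\int_{0}^{1} f(t,\omega)\int_{0}^{t} g(s,\omega)\,ds\,dt.
\end{equation*}
A routine H\"{o}lder estimate bounds this double expectation by a constant multiple of $\|f\|_{p'}\|g\|_{p}<\infty$, so Fubini allows me to swap the $s$- and $t$-integrals and obtain
\begin{equation*}
\langle \mathcal{L}^{\ast}(f),g\rangle \;=\; \mathbf{M}\int_{0}^{1} g(s,\omega)\,\psi(s,\omega)\,ds, \qquad \psi(s,\omega):=\int_{s}^{1}f(t,\omega)\,dt.
\end{equation*}
Because $g(s,\cdot)$ is $\mathfrak{F}_{s}$-measurable for each fixed $s$, the elementary identity $\mathbf{M}[g(s,\cdot)\psi(s,\cdot)] = \mathbf{M}[g(s,\cdot)\,\mathbf{M}\{\psi(s,\cdot)\mid\mathfrak{F}_{s}\}]$ holds pointwise in $s$.

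The delicate step is to integrate this identity in $s$ and read the resulting expression as a pairing with a concrete $L_{p'}$-element. For that I must exhibit a $\mathfrak{BF}$-measurable version of the process $s\mapsto\mathbf{M}\{\psi(s,\cdot)\mid\mathfrak{F}_{s}\}$. To produce one I would set $\xi:=\int_{0}^{1}f(t,\omega)\,dt\in L_{p'}(\Omega)$ (finiteness again by H\"{o}lder), take a c\`{a}dl\`{a}g modification $\mu(s,\omega)$ of the martingale $\mathbf{M}[\xi\mid\mathfrak{F}_{s}]$ (available from right-continuity of $\{\mathfrak{F}_{t}\}$ and (\ref{V80f2})), and observe that $\Phi(s,\omega):=\int_{0}^{s}f(t,\omega)\,dt$ is continuous in $s$ and $\mathfrak{F}_{s}$-measurable. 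The difference $\mu(s,\omega)-\Phi(s,\omega)$ is then a $\mathfrak{BF}$-measurable version of $\mathbf{M}\{\psi(s,\cdot)\mid\mathfrak{F}_{s}\}$.

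With this version at hand, Fubini combines with the pointwise conditioning identity to yield
\begin{equation*}
\langle\mathcal{L}^{\ast}(f),g\rangle \;=\; \mathbf{M}\int_{0}^{1} g(s,\omega)\bigl[\mu(s,\omega)-\Phi(s,\omega)\bigr]\,ds \qquad\text{for every } g\in L_{p},
\end{equation*}
which identifies $\mathcal{L}^{\ast}(f)$ with the class of $\mu-\Phi$ in $L_{p'}$, i.e.\ with (\ref{V80f9}). The ``more precisely'' clause is then automatic, since any two $\mathfrak{BF}$-measurable versions of the right-hand side of (\ref{V80f9}) differ only on a null set of the product measure and hence represent the same $L_{p'}$-class. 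The principal obstacle throughout is exactly this measurability/selection step, whose resolution rests on the martingale regularity encoded in (\ref{V80f2}) together with the pathwise continuity of $\Phi(s,\omega)$ in $s$.
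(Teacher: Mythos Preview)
Your proposal is correct and follows essentially the same route as the paper: verify the duality identity $\langle f,\mathcal{L}(\varphi)\rangle=\langle \tilde h,\varphi\rangle$ by Fubini and the tower property, after exhibiting a $\mathfrak{BF}$-measurable version $\tilde h$ of the conditional expectation. The only real difference is in how that $\mathfrak{BF}$-measurable selection is obtained: the paper simply cites an external result (Ershov \cite{V805}) for its existence, whereas you construct it explicitly as $\mu-\Phi$ with $\mu$ a c\`adl\`ag martingale and $\Phi=\mathcal{L}(f)$. This explicit construction is in fact exactly the decomposition (\ref{V80f10}) that the paper uses in the proof of the subsequent Corollary, so your argument effectively merges the theorem and its corollary into a single self-contained step. (A minor remark: the existence of the c\`adl\`ag modification comes from right-continuity of the filtration, as the paper notes in its preliminaries; inequality (\ref{V80f2}) is only needed if you also want to conclude $\mu\in DH_{p'}$.)
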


\begin{proof}
In accordance to definition of the conjugate operator $\mathcal{L}^{\ast
}\left( f\right) $ is an element $h$ of $L_{p^{\prime}}$ such that for all $%
\varphi\in L_{p}$%
\begin{equation*}
\left\langle f,\mathcal{L}\left( \varphi\right) \right\rangle =\left\langle
h,\varphi\right\rangle =\mathbf{M}\int\limits_{0}^{1}h\left( s\right)
\,\varphi\left( s\right) \,ds. 
\end{equation*}
As the random function $\int_{t}^{1}f\left( s,\omega\right) \,ds$ is $%
\mathfrak{B}^{1}\times\mathfrak{F}$-measurable (Fubini theorem), then (see 
\cite{V805}) there exist selections of conditional expectations such that
the random function $\tilde{h}\left( t\right) $ from right-hand side of (\ref%
{V80f9}) is $\mathfrak{BF}$-measurable. It is easy to prove that $\tilde{h}%
\in L_{p^{\prime}}$. Using Fubini theorem we get for any $\varphi\in L_{p}$%
\begin{gather*}
\mathbf{M}\int\limits_{0}^{1}\tilde{h}\left( s\right) \,\varphi\left(
s\right) \,ds=\int\limits_{0}^{1}\mathbf{MM}\left\{ \left. \int
\limits_{s}^{1}f\left( t,\omega\right) \,dt\right\vert \mathfrak{F}%
_{s}\right\} \,\varphi\left( s\right) \,ds= \\
=\int\limits_{0}^{1}\mathbf{MM}\left\{ \left. \int\limits_{s}^{1}f\left(
t,\omega\right) \,dt\varphi\left( s\right) \,\right\vert \mathfrak{F}%
_{s}\right\} \,ds= \\
=\mathbf{M}\int\limits_{0}^{1}\int\limits_{s}^{1}f\left( t,\omega\right)
\,dt\varphi\left( s\right) \,ds=\left\langle f,\mathcal{L}\left(
\varphi\right) \right\rangle .
\end{gather*}
This completes the proof of the theorem
\end{proof}

\begin{corollary}
\textit{The restriction of the operator }$L^{\ast}$\textit{\ on }$L_{q}$%
\textit{, }$q\in\left( 1,\infty\right) $ \textit{is bounded as an operator
from }$L_{q}$\textit{\ to }$DH_{q}$\textit{.}
\end{corollary}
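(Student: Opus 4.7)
The plan is to decompose $\mathcal{L}^{\ast}(f)$ into a martingale part plus (minus) an ordinary Lebesgue integral, and to estimate each piece separately by results already established.

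Write $\xi(\omega)=\int_{0}^{1}f(s,\omega)\,ds$. For $f\in L_{q}$, Hölder's inequality (applied pointwise in $\omega$ on the unit interval of length $1$) gives $\mathbf{M}|\xi|^{q}\leq \mathbf{M}\int_{0}^{1}|f(s,\omega)|^{q}\,ds=\|f\|_{q}^{q}$, so $\xi\in L_{q}(\Omega)$. Then, starting from (\ref{V80f9}), I would write
\begin{equation*}
\mathcal{L}^{\ast}(f)(t,\omega)=\mathbf{M}\{\xi\,|\,\mathfrak{F}_{t}\}-\int_{0}^{t}f(s,\omega)\,ds=\mu(t,\omega)-\mathcal{L}(f)(t,\omega),
\end{equation*}
where $\mu(t)=\mathbf{M}\{\xi\,|\,\mathfrak{F}_{t}\}$ is an $\{\mathfrak{F}_{t}\}$-martingale. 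Because the filtration is right-continuous, $\mu$ admits a modification with trajectories of class $D$; this yields a specific selection of the conditional expectations for which (\ref{V80f9}) holds and for which $\mathcal{L}^{\ast}(f)(\cdot,\omega)$ is $D$ almost surely (as the difference of a $D$ process and a continuous one).

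The two pieces are now easy to bound. For the martingale piece, the maximal inequality (\ref{V80f2}) together with the estimate on $\xi$ gives
\begin{equation*}
\mathbf{M}\sup_{0\leq t\leq 1}|\mu(t)|^{q}\leq (q')^{q}\,\mathbf{M}|\xi|^{q}\leq (q')^{q}\,\|f\|_{q}^{q}.
\end{equation*}
For the Lebesgue piece, Lemma \ref{L} gives $|\mathcal{L}(f)|_{q}^{(d)}\leq \|f\|_{q}$. Combining the two bounds via the triangle inequality in $DH_{q}$ yields
\begin{equation*}
|\mathcal{L}^{\ast}(f)|_{q}^{(d)}\leq (q'+1)\,\|f\|_{q},
\end{equation*}
which is exactly the asserted boundedness.

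The only subtlety — and what I would treat as the main obstacle — is the compatibility of the selection of conditional expectations with the $DH_{q}$ requirement: one must choose the càdlàg modification of $\mu$ so that $\mathcal{L}^{\ast}(f)(t,\omega)$ really has $D$-trajectories $\mathbf{P}$-a.s., while still being $\mathfrak{BF}$-measurable and belonging to the equivalence class produced by the previous theorem. This is handled by invoking the standard fact (using right continuity of $\{\mathfrak{F}_{t}\}$ and $q>1$) that a càdlàg, hence progressively measurable, modification of $\mu$ exists; everything else is routine.
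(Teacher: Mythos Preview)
Your argument is correct and follows essentially the same route as the paper: the decomposition $\mathcal{L}^{\ast}(f)=\mu-\mathcal{L}(f)$ is exactly the paper's formula~(\ref{V80f10}), and the paper then invokes (\ref{V80f2}) for the martingale piece and the properties of $\mathcal{L}$ (Lemma~\ref{L}) for the integral piece, just as you do. Your discussion of the c\`adl\`ag modification and progressive measurability is more explicit than the paper's, which simply states that $\mu$ has a $D$-modification.
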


\begin{proof}
Using (\ref{V80f9}), we get for any $f\in L_{q}$%
\begin{align}
\mathcal{L}^{\ast}\left( f\right) \left( t,\omega\right) & =\mathbf{M}%
\left\{ \left. \int\limits_{0}^{1}f\left( s,\omega\right) \,ds\right\vert 
\mathfrak{F}_{t}\right\} -\int\limits_{0}^{t}f\left( s,\omega\right)
\,ds\equiv  \label{V80f10} \\
& \equiv\mu\left( t,\omega\right) -\mathcal{L}\left( f\right) \left(
t,\omega\right) .  \notag
\end{align}
As a martingale the process $\mu\left( t\right) $ has a modification with
trajectories from $D$. Using (\ref{V80f2}), we get%
\begin{equation*}
\mathbf{M}\sup_{0\leqslant t\leqslant1}\left\vert \mu\left( t\right)
\right\vert ^{q}\leqslant\left( q^{\prime}\right) ^{q}\mathbf{M}\left\vert
\mu\left( 1\right) \right\vert ^{q}\leqslant\left( q^{\prime}\right)
^{q}\left\Vert f\right\Vert _{q}^{q}. 
\end{equation*}
Taking into account the inequalities and the properties of operator $%
\mathcal{L}$ also we obtain the corollary from (\ref{V80f10})
\end{proof}

\begin{corollary}
\textit{Suppose that the stream }$\left\{ \mathfrak{F}_{t}\right\} $\textit{%
\ is continuous in }$\left[ 0,1\right] $\textit{. Then the restriction of }$%
L^{\ast}$\textit{\ on }$L_{q}$\textit{, }$q\in\left( 1,\infty\right) $%
\textit{\ is bounded as an operator from }$L_{q}$\textit{\ to }$CH_{q}$%
\textit{.}
\end{corollary}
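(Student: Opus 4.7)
The plan is to reuse the decomposition
\begin{equation*}
\mathcal{L}^{\ast}\left( f\right) \left( t,\omega\right) =\mu\left( t,\omega\right) -\mathcal{L}\left( f\right) \left( t,\omega\right)
\end{equation*}
established in (\ref{V80f10}), where $\mu\left( t\right) =\mathbf{M}\left\{ \left. \int_{0}^{1}f\left( s,\omega\right) \,ds\right\vert \mathfrak{F}_{t}\right\} $ is a martingale. Both terms were already bounded in $L_q$-norm by $\left\Vert f\right\Vert _{q}$ in the previous corollary, so only the continuity of trajectories is at issue; the boundedness in the $\left\vert \cdot \right\vert_{q}^{(d)}$-norm then automatically yields boundedness into $CH_q$ once we verify the paths are a.s.\ continuous.

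The term $\mathcal{L}\left( f\right) $ is already in $CH_{q}$ by Lemma~\ref{L}, so the only thing to check is that the martingale $\mu\left( t\right) $ admits an a.s.\ continuous modification under the extra hypothesis that $\left\{ \mathfrak{F}_{t}\right\} $ is continuous. This is a standard general fact: when the filtration is continuous (left- and right-continuous without times of discontinuity), every $\left\{ \mathfrak{F}_{t}\right\} $-martingale with càdlàg trajectories from the class $D$ is in fact a.s.\ continuous. I would invoke this directly, citing \cite{V804}: since we already know from the preceding corollary that $\mu$ has a modification in $D$ with $\mathbf{M}\sup_{t}\left\vert \mu\left( t\right) \right\vert ^{q}<\infty$, continuity of the filtration kills the jumps and upgrades that modification to a continuous one.

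Combining, $\mathcal{L}^{\ast}\left( f\right) $ has an a.s.\ continuous modification, hence belongs to $CH_{q}$, and the bound
\begin{equation*}
\left\vert \mathcal{L}^{\ast}\left( f\right) \right\vert _{q}^{\left( d\right) }\leqslant \left\vert \mu \right\vert _{q}^{\left( d\right) }+\left\vert \mathcal{L}\left( f\right) \right\vert _{q}^{\left( d\right) }\leqslant \left( q^{\prime }+1\right) \left\Vert f\right\Vert _{q}
\end{equation*}
follows from inequality (\ref{V80f2}) applied to $\mu$ and from Lemma~\ref{L} applied to $\mathcal{L}\left( f\right) $. The only nontrivial step is the appeal to the continuity-of-filtration theorem for martingales; once that is cited, everything else is bookkeeping inherited from the previous corollary.
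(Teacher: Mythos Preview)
Your proposal is correct and follows essentially the same approach as the paper: both use the decomposition (\ref{V80f10}) and observe that, under the continuity hypothesis on $\left\{ \mathfrak{F}_{t}\right\}$, the martingale $\mu(t)$ admits a continuous modification, so the result follows immediately from the previous corollary. The paper's proof is just the one-line version of what you wrote.
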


\begin{proof}
As in this case the martingale $\mu\left( t\right) $ has a continuous
modification, then the corollary leads immediately from (\ref{V80f10})
\end{proof}

\section{Operator\ $\mathcal{J}$}

\subsection{Definition of $\mathcal{J}$}

Let $\varphi\in N_{1}$. We consider only continuous modifications of Ito
integral. In this case random functions $\mathcal{J}\left( \tilde{\varphi }%
\right) \left( t,\omega\right) $ corresponding to different members $\tilde{%
\varphi}$ of the class $\varphi$ are indistinguishable from the each other.
So, we can define the operator $\mathcal{J}:N_{p}\rightarrow CH_{p}$.

\begin{lemma}
\textit{The linear operator }$\mathcal{J}:N_{p}\rightarrow CH_{p}$\textit{,}$%
\;p\in\left[ 2,\infty\right) $ \textit{is bounded.}
\end{lemma}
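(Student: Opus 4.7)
The plan is to combine Doob's maximal inequality (\ref{V80f2}) with the right-hand side of the Burkholder--Davis--Gundy-type inequality (\ref{V80f3}). Both ingredients are already available to us.

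First I would observe that for $p\in[2,\infty)$ we have $N_p\subset N_1$: by Jensen's inequality applied to the concave function $x\mapsto x^{1/2}$ (equivalently, Hölder in the exponent $p/2\geq 1$),
\begin{equation*}
\mathbf{M}\Bigl(\int_0^1|\varphi|^2\,ds\Bigr)^{1/2}\leqslant\Bigl[\mathbf{M}\Bigl(\int_0^1|\varphi|^2\,ds\Bigr)^{p/2}\Bigr]^{1/p}=\|\varphi\|_{N_p}<\infty.
\end{equation*}
Consequently, by the remark following (\ref{V80f4}), the random function $\mu(t)\equiv\mathcal{J}(\varphi)(t,\omega)$ is a martingale with respect to $\{\mathfrak{F}_t\}$. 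By construction we are using its continuous modification, so $\mu\in CH_p$ once we verify the required integrability.

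Next I would apply (\ref{V80f2}) to $\mu(t)$, which gives
\begin{equation*}
\mathbf{M}\sup_{0\leqslant t\leqslant 1}|\mathcal{J}(\varphi)(t,\omega)|^p\leqslant(p')^p\,\mathbf{M}|\mathcal{J}(\varphi)(1,\omega)|^p.
\end{equation*}
The right-hand side is then controlled by the right inequality of (\ref{V80f3}):
\begin{equation*}
\mathbf{M}|\mathcal{J}(\varphi)(1,\omega)|^p\leqslant B_p^p\,\|\varphi\|_{N_p}^p.
\end{equation*}
Combining these two bounds yields $|\mathcal{J}(\varphi)|_p^{(d)}\leqslant p'\,B_p\,\|\varphi\|_{N_p}$, which is exactly the desired boundedness of $\mathcal{J}:N_p\to CH_p$.

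There is no real obstacle here: the only subtlety is the preliminary check that $\varphi\in N_p$ for $p\geqslant 2$ is integrable enough to make $\mathcal{J}(\varphi)$ a martingale (so Doob's inequality may be applied), and this is precisely the reason the restriction $p\geqslant 2$ appears in the statement. For $p\in[1,2)$ one would instead have $N_p\supset N_2$ and the martingale property is not guaranteed by the assumption $\varphi\in N_p$ alone, so the same direct argument would fail.
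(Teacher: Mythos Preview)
Your proof is correct and follows essentially the same route as the paper: apply Doob's inequality (\ref{V80f2}) to the martingale $\mathcal{J}(\varphi)$ and then bound $\mathbf{M}|\mathcal{J}(\varphi)(1)|^{p}$ by the right-hand side of (\ref{V80f3}), arriving at $|\mathcal{J}(\varphi)|_{p}^{(d)}\leqslant p'B_{p}\|\varphi\|_{N_{p}}$. The only difference is that you spell out the inclusion $N_{p}\subset N_{1}$ to justify the martingale property, whereas the paper simply asserts it.
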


\begin{proof}
The random function $\mathcal{J}\left( \varphi\right) \left( t,\omega
\right) $ is a martingale for any $\varphi\in N_{p}$. Using (\ref{V80f2})
and (\ref{V80f3}), we get%
\begin{equation*}
\mathbf{M}\sup_{0\leqslant t\leqslant1}\left\vert \mathcal{J}\left(
\varphi\right) \left( t,\omega\right) \right\vert ^{p}\leqslant\left(
p^{\prime}\right) ^{p}\mathbf{M}\left\vert \mathcal{J}\left( \varphi\right)
\left( 1,\omega\right) \right\vert ^{p}\leqslant\left( p^{\prime}\right)
^{p}B_{p}^{p}\left\Vert \varphi\right\Vert _{N_{p}}^{p}. 
\end{equation*}
This inequality can be rewrite in the form%
\begin{equation*}
\left\vert \mathcal{J}\left( \varphi\right) \left( t,\omega\right)
\right\vert _{p}^{\left( d\right) }\leqslant p^{\prime}B_{p}\left\Vert
\varphi\right\Vert _{N_{p}}
\end{equation*}
that proves the lemma
\end{proof}

\begin{corollary}
\textit{The operator }$\mathcal{J}:L_{p}\rightarrow CH_{p}$,$\;p\in\left[
2,\infty\right) $\textit{\ is bounded.}
\end{corollary}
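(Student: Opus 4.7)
The plan is to reduce the corollary immediately to the preceding lemma by establishing a continuous embedding $L_{p}\hookrightarrow N_{p}$ for $p\in[2,\infty)$. Once that embedding is in hand, the operator $\mathcal{J}:L_{p}\to CH_{p}$ factors as the composition of the inclusion $L_{p}\hookrightarrow N_{p}$ with the already-bounded operator $\mathcal{J}:N_{p}\to CH_{p}$, and boundedness of the composition follows.

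The embedding $L_{p}\subset N_{p}$ is a one-line consequence of Jensen's inequality. Since the Lebesgue measure on $[0,1]$ is a probability measure and $x\mapsto x^{p/2}$ is convex for $p\geqslant 2$, we have pointwise in $\omega$
\begin{equation*}
\left(\int_{0}^{1}|\varphi(t,\omega)|^{2}\,dt\right)^{p/2}\leqslant \int_{0}^{1}|\varphi(t,\omega)|^{p}\,dt.
\end{equation*}
Taking expectations yields $\|\varphi\|_{N_{p}}\leqslant \|\varphi\|_{p}$, so that every $\varphi\in L_{p}$ belongs to $N_{p}$ with a norm control depending only on $p$ (in fact with constant $1$).

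Given this, applying the preceding lemma to $\varphi\in L_{p}\subset N_{p}$ gives
\begin{equation*}
|\mathcal{J}(\varphi)|_{p}^{(d)}\leqslant p^{\prime}B_{p}\|\varphi\|_{N_{p}}\leqslant p^{\prime}B_{p}\|\varphi\|_{p},
\end{equation*}
which is the desired boundedness. There is no real obstacle here; the only point worth stating explicitly is the restriction $p\geqslant 2$, which is exactly where the convexity of $x\mapsto x^{p/2}$ is needed for the embedding, and which is also the range of $p$ for which the previous lemma was proved.
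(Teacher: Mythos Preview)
Your proof is correct and is precisely the intended argument: the paper states the corollary without proof, and the only missing step is the continuous embedding $L_{p}\hookrightarrow N_{p}$ for $p\geqslant 2$, which you supply cleanly via Jensen's inequality applied to the convex function $x\mapsto x^{p/2}$ on the probability space $[0,1]$. Nothing needs to be added.
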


\subsection{Operator $\mathcal{\tilde{J}}$}

Let $\mathcal{J}^{\ast}:\left( CH_{p}\right) ^{\ast}\rightarrow N_{p}^{\ast }
$ be conjugate to the operator $\mathcal{J}$. Let obtain the explicit
representation for the restriction of $\mathcal{J}^{\ast}$ to a subspace $%
L_{p^{\prime}}\equiv\left( L_{p}\right) ^{\ast}\subset\left( CH_{p}\right)
^{\ast}$. For this purpose let firstly perform some auxiliary constructions.

Let $N_{r}\left( \mathcal{B}\times \mathcal{BF}\right) $, $\,r\in \left(
1,\infty \right) $ be the Banach space of $\mathcal{B}\times \mathcal{BF}$%
-measurable functions $\lambda \left( t,s,\omega \right) $, $0\leqslant
t\leqslant 1$, $0\leqslant s\leqslant t$, $\omega \in \Omega $ (classes of
equivalent functions) with the norm%
\begin{equation*}
\left\vert \left\vert \left\vert \lambda \right\vert \right\vert \right\vert
_{r}=\left\{ \mathbf{M}\int\limits_{0}^{1}\left( \int\limits_{0}^{1}\lambda
^{2}\left( t,s,\omega \right) \right) ^{r/2}\right\} ^{1/r}.
\end{equation*}%
Consider a linear manifold $T_{r}$ in $N_{r}\left( \mathcal{B}\times 
\mathcal{BF}\right) $ consisting of functions of the type%
\begin{equation}
\lambda \left( t,s,\omega \right) =\sum\limits_{k=0}^{n-1}\chi _{\Delta
_{k}}\left( t\right) \alpha _{k}\left( s,\omega \right) ,\;t\in \left[ 0,1%
\right] ,~s\in \left[ 0,t\right] ,~\omega \in \Omega ,  \label{V80f11}
\end{equation}%
where $\Delta _{k}\equiv \left[ t_{k},t_{k-1}\right) $, $0=t_{0}<t_{1}<%
\ldots <t_{n}=1$, functions $\alpha _{k}\left( s,\omega \right) $ are $%
\mathcal{BF}$-measurable, $\alpha _{0}\equiv 0$, $\alpha _{k}\left( s,\omega
\right) =0$ for $s\in \left( t_{k},1\right] $, $k\geqslant 1$, 
\begin{equation*}
M\left( \int\limits_{0}^{t_{k}}\alpha _{k}^{2}\left( s,\omega \right)
\,ds\right) ^{r/2}<\infty ,
\end{equation*}%
$\chi _{A}\left( t\right) $ is the indicator of a set $A$.

Set for $\lambda\in T_{r}$%
\begin{equation*}
\mathcal{\tilde{J}}\left( \lambda\right) \left( t,\omega\right) \equiv
\sum\limits_{k=0}^{n-1}\chi_{\Delta_{k}}\left( t\right)
\int\limits_{0}^{t_{k}}\alpha_{k}\left( s,\omega\right) \,dw\left(
s,\omega\right) . 
\end{equation*}
It is easy to prove that the operator $\mathcal{\tilde{J}}$ is linear on the
manifold $T_{r}$. Applying inequalities (\ref{V80f3}), we then obtain the
inequalities%
\begin{equation}
A_{r}\left\vert \left\vert \left\vert \lambda\right\vert \right\vert
\right\vert _{r}\leqslant\left\Vert \mathcal{\tilde{J}}\left( \lambda\right)
\right\Vert _{r}\leqslant B_{r}\left\vert \left\vert \left\vert \lambda
\right\vert \right\vert \right\vert _{r}.   \label{V80f12}
\end{equation}

Let $\bar{T}_{r}$ be the closure of $T_{r}$ in $N_{r}\left( \mathcal{B}\times%
\mathcal{BF}\right) $. Using of the right inequality in (\ref{V80f12}), we
can perform the expansion of operator $\mathcal{\tilde{J}}$ on $\bar{T}_{r}$
by continuity; this expansion also keep the inequalities (\ref{V80f12}).

\begin{lemma}
\label{Lr}\textit{The set }$L_{r}\left( w\right) \equiv\left\{ \mathcal{%
\tilde{J}}\left( \lambda\right) ,\;\lambda\in\bar{T}_{r}\right\} $\textit{\
is closed in }$L_{r}.$
\end{lemma}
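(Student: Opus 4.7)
The plan is to leverage the two-sided bound (\ref{V80f12}), which says that the extension of $\mathcal{\tilde{J}}$ to $\bar{T}_{r}$ is a linear isomorphism (with bounded inverse on its image) between $\bar{T}_{r}$ (with the norm $|||\cdot|||_{r}$) and its range in $L_{r}$. Once this is observed, closedness of the range in $L_{r}$ follows automatically from closedness of the domain $\bar{T}_{r}$ in $N_{r}(\mathcal{B}\times\mathcal{BF})$, which is built into the definition of $\bar{T}_{r}$ as a norm-closure.

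Concretely, I would take an arbitrary convergent sequence $f_{n}\to f$ in $L_{r}$ with $f_{n}\in L_{r}(w)$, so that $f_{n}=\mathcal{\tilde{J}}(\lambda_{n})$ for some $\lambda_{n}\in\bar{T}_{r}$. Since $\{f_{n}\}$ is Cauchy in $L_{r}$, the left inequality in (\ref{V80f12}), extended to $\bar{T}_{r}$ by continuity, gives
\begin{equation*}
A_{r}|||\lambda_{n}-\lambda_{m}|||_{r}\leqslant\|\mathcal{\tilde{J}}(\lambda_{n}-\lambda_{m})\|_{r}=\|f_{n}-f_{m}\|_{r}\longrightarrow 0,
\end{equation*}
so that $\{\lambda_{n}\}$ is Cauchy in $(\bar{T}_{r},|||\cdot|||_{r})$. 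Because $\bar{T}_{r}$ is a closed subspace of the Banach space $N_{r}(\mathcal{B}\times\mathcal{BF})$, it is itself complete, and hence $\lambda_{n}\to\lambda$ for some $\lambda\in\bar{T}_{r}$.

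Finally, the right inequality in (\ref{V80f12}) yields $\|\mathcal{\tilde{J}}(\lambda_{n})-\mathcal{\tilde{J}}(\lambda)\|_{r}\leqslant B_{r}|||\lambda_{n}-\lambda|||_{r}\to 0$, so $f_{n}=\mathcal{\tilde{J}}(\lambda_{n})\to\mathcal{\tilde{J}}(\lambda)$ in $L_{r}$. By uniqueness of limits, $f=\mathcal{\tilde{J}}(\lambda)\in L_{r}(w)$, proving closedness.

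There is no real obstacle here: the lemma is essentially the abstract fact that the image of a bounded linear map with a lower bound on a Banach space is closed. The only two points worth verifying explicitly, both of which are already prepared in the paragraph preceding the lemma, are that the two-sided estimate (\ref{V80f12}) genuinely passes from $T_{r}$ to its closure $\bar{T}_{r}$, and that $\bar{T}_{r}$ inherits completeness from $N_{r}(\mathcal{B}\times\mathcal{BF})$.
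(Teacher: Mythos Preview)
Your proof is correct and follows essentially the same argument as the paper: both use the left inequality in (\ref{V80f12}) to show the preimage sequence $\{\lambda_n\}$ is Cauchy in $\bar{T}_r$, invoke closedness of $\bar{T}_r$ in $N_r(\mathcal{B}\times\mathcal{BF})$ to produce a limit $\lambda\in\bar{T}_r$, and then use the right inequality to conclude $\mathcal{\tilde{J}}(\lambda)$ is the $L_r$-limit.
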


\begin{proof}
\textsf{\ }Let $\left\{ \xi_{n}=\mathcal{\tilde{J}}\left( \lambda_{n}\right)
,\;\lambda_{n}\in\bar{T}_{r}\right\} \subset L_{r}\left( w\right) $ and $%
\left\Vert \xi_{n}-\xi_{m}\right\Vert _{r}\rightarrow0$ as $n,m\rightarrow
\infty$. Using the left inequality in (\ref{V80f12}) we obtain 
\begin{equation*}
A_{r}\left\vert \left\vert \left\vert \lambda_{n}-\lambda_{m}\right\vert
\right\vert \right\vert _{r}\leqslant\left\Vert \xi_{n}-\xi_{m}\right\Vert
_{r}\rightarrow0,\;n,m\rightarrow\infty. 
\end{equation*}
The sequence $\left\{ \lambda_{n}\right\} $ is fundamental in $N_{r}\left( 
\mathcal{B}\times\mathcal{BF}\right) $, so it converges to some element $%
\lambda\in N_{r}\left( \mathcal{B}\times\mathcal{BF}\right) $. As $\bar {T}%
_{r}$ is closed, then $\lambda\in\bar{T}_{r}$. Let $\xi=\mathcal{\tilde{J}}%
\left( \lambda\right) .$ From the right inequality in (\ref{V80f12}) we get%
\begin{equation*}
\left\Vert \xi_{n}-\xi\right\Vert _{r}\leqslant B_{r}\left\vert \left\vert
\left\vert \lambda_{n}-\lambda\right\vert \right\vert \right\vert
_{r}\rightarrow0,\;n\rightarrow\infty, 
\end{equation*}
that proves the lemma
\end{proof}

Let study the operator $\mathcal{\tilde{J}}$.

\begin{lemma}
\label{ad}\textit{Let }$\lambda\in\bar{T}_{r}$\textit{, }$\eta=\mathcal{%
\tilde {J}}\left( \lambda\right) $\textit{. Then}
\end{lemma}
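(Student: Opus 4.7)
My plan is to follow the standard two-step extension-by-continuity pattern already used throughout the paper: verify the claim for $\lambda$ in the dense manifold $T_{r}$, where everything is explicit, and then extend to $\bar{T}_{r}$ via the norm estimates (\ref{V80f12}).

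For $\lambda = \sum_{k=0}^{n-1}\chi_{\Delta_{k}}(t)\alpha_{k}(s,\omega) \in T_{r}$, the process $\eta$ is piecewise constant in $t$: on each $\Delta_{k}$ it equals $\int_{0}^{t_{k}} \alpha_{k}(s,\omega)\,dw(s,\omega)$, which is $\mathfrak{F}_{t_{k}}$-measurable and hence $\mathfrak{F}_{t}$-measurable for every $t \in \Delta_{k}$. This already shows that $\eta$ is $\mathfrak{BF}$-measurable, and any natural accompanying identity one might wish to assert (a duality pairing $\mathbf{M}\int_{0}^{1}\eta(t)\varphi(t)\,dt$ against a test function $\varphi \in N_{r'}$ or $L_{r'}$, an expression for $\mathbf{M}\eta(t)$, or the pointwise formula $\eta(t,\omega) = \int_{0}^{t}\lambda(t,s,\omega)\,dw(s)$) reduces term-by-term to statements about single It\^{o} integrals, each of which is immediate from (\ref{V80f3}), (\ref{V80f4}) and Lemma \ref{L2}.

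To extend to general $\lambda \in \bar{T}_{r}$, I would pick $\lambda_{n} \in T_{r}$ with $|||\lambda_{n} - \lambda|||_{r} \to 0$. By (\ref{V80f12}) the processes $\eta_{n} = \tilde{\mathcal{J}}(\lambda_{n})$ converge to $\eta$ in $L_{r}$, and Lemma \ref{Lr} guarantees $\eta$ stays in $L_{r}$. Any linear identity established at the $T_{r}$ level then carries over by continuity: dual pairings with elements of $L_{r'}$ or $N_{r'}$ are continuous in the $L_{r} \times L_{r'}$ topology, and identities that involve further It\^{o} integrals of $\eta$ times an adapted test function are preserved using the right-hand inequality in (\ref{V80f3}) combined with H\"{o}lder, exactly as in the proof of Lemma \ref{L2}.

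The main obstacle I expect is matching the abstractly-defined $\eta$ with the naive pointwise expression $\int_{0}^{t}\lambda(t,s,\omega)\,dw(s,\omega)$: for each fixed $t$ the latter requires $\lambda(t,\cdot,\omega) \in N_{r}$ and convergence $\lambda_{n}(t,\cdot,\omega) \to \lambda(t,\cdot,\omega)$ in $N_{r}$, which follows from $|||\lambda_{n}-\lambda|||_{r} \to 0$ only for almost every $t$ (by Fubini applied to $|||\lambda_{n}-\lambda|||_{r}^{r}$) and possibly only along a subsequence. Once this Fubini-plus-subsequence bookkeeping is done, the convergence of the stochastic integrals is routine from (\ref{V80f3}), and the sought representation of $\eta$ is obtained.
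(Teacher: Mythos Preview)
Your proposal is correct and follows essentially the same route as the paper. The paper's proof likewise hinges on the pointwise identification $\eta(t,\omega)=\int_{0}^{t}\lambda(t,s,\omega)\,dw(s,\omega)$ for almost all $t$, obtained exactly via the Fubini-plus-subsequence bookkeeping you describe (a subsequence for $L_{r}$-convergence in $t$, then a further subsequence for the $N_{r}$-convergence of $\lambda_{n}(t,\cdot)$); items (a)--(c) are then read off from It\^{o} integral properties and Lemma~\ref{L2}, while item (d) is handled by your ``verify on $T_{r}$, pass to the limit'' scheme directly.
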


\begin{description}
\item[a)] $M\eta\left( t\right) =0$\textit{\ for almost all }$t\in\left[ 0,1%
\right] $\textit{\ ;}

\item[b)] $\forall t\in\left[ 0,1\right] $\textit{\ almost all }$s\in\left[
t,1\right] $\textit{\ a.s. }%
\begin{equation*}
\mathbf{M}\left\{ \left. \eta\left( s\right) \right\vert \mathcal{F}%
_{t}\right\} =\int\limits_{0}^{t}\lambda\left( t,s,\omega\right) \,dw\left(
s,\omega\right) ; 
\end{equation*}

\item[c)] \textit{if }$\varphi\in N_{r^{\prime}}$,\textit{\ then for almost
all }$t\in\left[ 0,1\right] $%
\begin{equation*}
\mathbf{M}\eta\left( t\right) \mathcal{J}\left( \varphi\right) \left(
t\right) =\mathbf{M}\int\limits_{0}^{t}\lambda\left( t,s,\omega\right)
\varphi\left( s,\omega\right) \,ds; 
\end{equation*}

\item[d)] \textit{if }$r=2$\textit{\ and }$\tau\in\left( 0,1\right] $,%
\textit{\ then }%
\begin{equation*}
\int\limits_{o}^{\tau}\eta\left( s\right) \,ds=\int\limits_{o}^{\tau}\left(
\int\limits_{t}^{\tau}\lambda\left( t,s,\omega\right) \,ds\right) \,dw\left(
t,\omega\right) ~\text{~}\mathit{a.s.}
\end{equation*}
\end{description}

\begin{proof}
Since $\lambda\in N_{r}\left( \mathcal{B}\times\mathcal{BF}\right) $, it
follows that $\lambda\left( t,\cdot\right) \in N_{r}\left( \left[ 0,t\right]
\times\Omega\right) $ for almost all $t\in\left[ 0,1\right] $. So, we can
define the value%
\begin{equation*}
\xi_{t}\left( \omega\right) =\int\limits_{0}^{t}\lambda\left(
t,s,\omega\right) \,dw\left( s,\omega\right) , 
\end{equation*}
where the integral in the right-hand side is of Ito type. Let us show that
for almost all $t$ 
\begin{equation}
\xi_{t}\left( \omega\right) =\eta\left( t,\omega\right) \ a.s. 
\label{V80f13}
\end{equation}

Indeed, we have $\eta=\lim\limits_{n\rightarrow\infty}\left( L_{r}\right) 
\mathcal{\tilde{J}}\left( \lambda_{n}\right) $, where $\left\{ \lambda
_{n}\right\} \subset T_{r}$. Thus, for almost all $t$ and for some sequence $%
n_{k}\rightarrow\infty$%
\begin{equation*}
\mathbf{M}\left\vert \eta_{n_{k}}\left( t\right) -\eta\left( t\right)
\right\vert ^{r}\rightarrow0,\;\eta_{k}=\mathcal{\tilde{J}}\left( \lambda
_{k}\right) . 
\end{equation*}
In other hand, for some subsequence of the sequence $n_{k}$%
\begin{equation*}
\mathbf{M}\left\vert \eta_{n_{k_{m}}}\left( t\right) -\xi\left( t\right)
\right\vert ^{r}\leqslant B_{r}^{r}\mathbf{M}\left(
\int\limits_{0}^{t}\left\vert \lambda\left( t,s\right)
-\lambda_{n_{k_{m}}}\left( t,s\right) \right\vert ^{2}\,ds\right)
^{r/2}\rightarrow0,\;m\rightarrow \infty 
\end{equation*}
for almost all $t$. Hence, for almost all $t\in\left[ 0,1\right] $ we have $%
\mathbf{M}\left\vert \eta\left( t\right) -\xi_{t}\right\vert ^{r}=0$ that
implies (\ref{V80f13}). Using this result, the Lemma \ref{ad} and Ito's
integral properties we obtain items (a)-(c) of the lemma.

Let prove the last item. Denote by $\alpha$ and $\beta$, respectively,
random values in right-hand and left-hand sides in item (d). Let also%
\begin{equation*}
\alpha_{n}\equiv\int\limits_{0}^{\tau}\eta_{n}\left( s\right) \,ds,\;\beta
_{n}\equiv\int\limits_{0}^{\tau}\left( \int\limits_{t}^{\tau}\lambda
_{n}\left( s,t\right) \,ds\right) \,dw\left( t\right) . 
\end{equation*}
Using the construction of $\lambda_{n}\in T$, it is easy to verify that%
\begin{equation*}
\alpha_{n}=\sum\limits_{k=0}^{n-1}\limfunc{mes}\left( \Delta_{k}\cap\left[
0,\tau\right] \right) \int\limits_{0}^{t_{k}}\alpha_{k}\left( s\right)
\,dw\left( s\right) =\beta_{n}\ \ a.s. 
\end{equation*}
Further, 
\begin{gather*}
\mathbf{M}\left\vert \alpha_{n}-\alpha\right\vert ^{2}\leqslant\left\Vert
\eta_{n}-\eta\right\Vert _{2}^{2}\rightarrow0, \\
\mathbf{M}\left\vert \beta_{n}-\beta\right\vert ^{2}=\mathbf{M}\int
\limits_{0}^{\tau}\left\vert \int\limits_{t}^{\tau}\left( \lambda_{n}\left(
s,t\right) -\lambda\left( s,t\right) \right) \,ds\right\vert
^{2}\,dt\leqslant \\
\leqslant\mathbf{M}\int\limits_{0}^{1}\int\limits_{0}^{t}\left\vert
\lambda_{n}\left( s,t\right) -\lambda\left( s,t\right) \right\vert
^{2}\,ds\,dt\equiv\left\vert \left\vert \left\vert \lambda_{n}-\lambda
\right\vert \right\vert \right\vert
_{2}^{2}\rightarrow0,\;n\rightarrow\infty.
\end{gather*}
So, we have $\mathbf{M}\left\vert \alpha-\beta\right\vert ^{2}=0$ that
completes the proof
\end{proof}

\begin{lemma}
\textit{For every }$\varphi\in L_{r}\left( w\right) $\textit{, }$r\in\left(
1,2\right] $\textit{\ there exist an element }$\lambda\in\bar{T}_{r}$\textit{%
\ such that }$\limfunc{mes}\times P$\textit{-a.s. }%
\begin{equation}
\varphi\left( t,\omega\right) =\mathbf{M}\varphi\left( t,\omega\right) +%
\mathcal{\tilde{J}}\left( \lambda\right) \left( t,\omega\right) . 
\label{V80f14}
\end{equation}
\textit{\ Linear operator }$\mathcal{K}:\lambda=\mathcal{K}\varphi$\textit{\
is bounded from }$L_{r}\left( w\right) $\textit{\ to }$N_{r}\left( \mathcal{B%
}\times\mathcal{BF}\right) $\textit{.}
\end{lemma}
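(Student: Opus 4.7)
The plan is to exploit Theorem \ref{T1} fibrewise in $t$ and then glue the resulting integrands together via approximation by simple processes, so that joint $\mathcal{B}\times\mathcal{BF}$-measurability of $\lambda$ is built in by construction and never has to be verified directly.

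First, I would single out the subset $\mathcal{S}\subset L_{r}(w)$ consisting of step processes
\[
\varphi(t,\omega)=\sum_{k=1}^{n-1}\chi_{\Delta_{k}}(t)\,\xi_{k}(\omega),\qquad \Delta_{k}=[t_{k},t_{k+1}),
\]
with $0=t_{0}<t_{1}<\dots<t_{n}=1$ and each $\xi_{k}$ an $\mathfrak{F}_{t_{k}}^{w}$-measurable value in $L_{r}(\Omega)$. Applying Theorem \ref{T1} to each $\xi_{k}$ produces $\alpha_{k}\in N_{r}(w)$ with $\alpha_{k}(s,\omega)=0$ for $s>t_{k}$ and $\xi_{k}=\mathbf{M}\xi_{k}+\int_{0}^{t_{k}}\alpha_{k}\,dw$. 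Setting $\lambda(t,s,\omega)=\sum_{k}\chi_{\Delta_{k}}(t)\,\alpha_{k}(s,\omega)$ gives an element of $T_{r}$ which, by the very definition of $\tilde{\mathcal{J}}$, satisfies $\tilde{\mathcal{J}}(\lambda)(t,\omega)=\varphi(t,\omega)-\mathbf{M}\varphi(t,\omega)$.

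Second, the bilateral estimate (\ref{V80f12}) yields the a priori bound
\[
A_{r}\,|||\lambda|||_{r}\leq \|\tilde{\mathcal{J}}(\lambda)\|_{r}=\|\varphi-\mathbf{M}\varphi\|_{r}\leq 2\|\varphi\|_{r},
\]
so the assignment $\mathcal{K}\colon\varphi\mapsto\lambda$ is linear and bounded from $\mathcal{S}$ into $T_{r}\subset\bar T_{r}$. Because $\mathcal{S}$ is dense in $L_{r}(w)$ and $\bar T_{r}$ is complete, $\mathcal{K}$ extends uniquely by continuity to a bounded linear operator $L_{r}(w)\to\bar T_{r}\subset N_{r}(\mathcal{B}\times\mathcal{BF})$. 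The representation (\ref{V80f14}) then follows from a standard limiting argument: for $\varphi_{n}\in\mathcal{S}$ with $\varphi_{n}\to\varphi$ in $L_{r}$, the right-hand inequality in (\ref{V80f12}) (which extends to $\bar T_{r}$) gives $\tilde{\mathcal{J}}(\mathcal{K}\varphi_{n})\to\tilde{\mathcal{J}}(\mathcal{K}\varphi)$ in $L_{r}$, while on the left $\tilde{\mathcal{J}}(\mathcal{K}\varphi_{n})=\varphi_{n}-\mathbf{M}\varphi_{n}\to\varphi-\mathbf{M}\varphi$ in $L_{r}$; comparing the two limits forces the claimed equality $\limfunc{mes}\times\mathbf{P}$-a.s.

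The step I expect to be the main obstacle is the density of $\mathcal{S}$ in $L_{r}(w)$. For the right-continuous filtration $\{\mathfrak{F}_{t}^{w}\}$ this is classical, but has to be handled with care because the values $\xi_{k}$ must be $\mathfrak{F}_{t_{k}}^{w}$-measurable (and not merely $\mathfrak{F}_{t_{k+1}}^{w}$-measurable) if $\mathcal{K}\varphi$ is to lie in $T_{r}$. A natural route is to first regularize $\varphi$ in $t$, for instance by averaging over small left intervals of length $\varepsilon\downarrow 0$, which preserves progressive measurability and converges in $L_{r}$, and then to discretize. Once this density is secured, the remainder of the proof is the straightforward continuity extension described above, built on Theorem \ref{T1} and the bilateral estimate (\ref{V80f12}).
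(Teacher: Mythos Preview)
Your proposal is correct and follows essentially the same route as the paper: represent step processes in $L_{r}(w)$ via Theorem~\ref{T1} applied fibrewise to obtain elements of $T_{r}$, use the two-sided bound (\ref{V80f12}) to show the assignment $\varphi\mapsto\lambda$ is bounded on step processes, and then pass to general $\varphi$ by density and a Cauchy/limit argument. The paper simply cites the density $\overline{S_{r}}=L_{r}(w)$ as well known from \cite{V804}, whereas you sketch a regularization route; otherwise the arguments coincide.
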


\begin{proof}
Let\textsf{\ }$S_{r}$ is the set of step-functions in $L_{r}\left( w\right) $%
, i.e., the set of all functions of the type%
\begin{equation*}
\psi\left( t,\omega\right) =\sum\limits_{k=0}^{n-1}\chi_{\Delta_{k}}\left(
t\right) \psi_{k}\left( \omega\right) ,\;\psi_{k}\in L_{r}\left(
\Omega\right) , 
\end{equation*}
where the random values $\psi_{k}\left( \omega\right) ,k\geqslant0$ are
measurable with respect to $\sigma$-algebras $\mathcal{F}_{t_{k}}^{w}$. It
is well known that $\bar{S}_{r}=L_{r}\left( w\right) $ \cite{V804}. Let
prove that any $\psi\in S_{r}$ can be represented in the form (\ref{V80f14})
with some $\lambda\in T_{r}$. In fact, according to Theorem \ref{T1} the
random function $\psi_{k}\left( \omega\right) $ can be represented in the
form%
\begin{equation*}
\psi_{k}\left( \omega\right) =\mathbf{M}\psi_{k}\left( \omega\right)
+\int\limits_{0}^{t_{k}}\alpha_{k}\left( s,\omega\right) \,dw\left(
s,\omega\right) , 
\end{equation*}
where $M\left( \int_{0}^{t_{k}}\alpha_{k}^{2}\left( s\right) \,ds\right)
^{r/2}$. Extending functions $\alpha_{k}\left( s,\omega\right) $ by zero to $%
\left[ 0,1\right] \times\Omega$ we obtain%
\begin{equation*}
\psi\left( t,\omega\right) =\mathbf{M}\psi\left( t,\omega\right) +\mathcal{%
\tilde{J}}\left( \lambda\right) \left( t,\omega\right) , 
\end{equation*}
where the function $\lambda$ has the form (\ref{V80f11}). Further, let $%
\varphi$ be an arbitrary function from $L_{r}\left( w\right) $ and $%
\left\Vert \varphi-\psi_{n}\right\Vert _{r}\rightarrow0,\,n\rightarrow\infty 
$, where $\left\{ \psi_{n}\right\} \subset S_{r}.$ Let $\lambda _{n}=%
\mathcal{K}\psi_{n}$. Applying the left-hand bound in (\ref{V80f12}) and
taking into account that the sequence $\left\{ \psi_{n}\right\} $ is
fundamental we receive 
\begin{equation*}
A_{r}\left\vert \left\vert \left\vert \lambda_{n}-\lambda_{m}\right\vert
\right\vert \right\vert _{r}\leqslant\left\Vert \mathcal{\tilde{J}}\left(
\lambda_{n}-\lambda_{m}\right) \right\Vert _{r}\leqslant2\left\Vert \psi
_{n}-\psi_{m}\right\Vert _{r}\rightarrow0 
\end{equation*}
as $n,m\rightarrow\infty$. Hence, $\left\{ \lambda_{n}\right\} $ is a
fundamental sequence in $N_{r}\left( \mathcal{B}\times\mathcal{BF}\right) $,
so, it converges to some $\lambda\in\bar{T}_{r}.$ Let $\varphi_{1}=\mathbf{M}%
\varphi+\mathcal{\tilde{J}}\left( \lambda\right) $. We get%
\begin{gather*}
\left\Vert \varphi_{1}-\varphi\right\Vert _{r}\leqslant\left\Vert \varphi
-\psi_{n}\right\Vert _{r}+\left\Vert \varphi_{1}-\psi_{n}\right\Vert
_{r}\leqslant \\
\leqslant\left\Vert \varphi-\psi_{n}\right\Vert _{r}+\left\Vert \mathbf{M}%
\left( \varphi-\psi_{n}\right) \right\Vert _{r}+ \\
+\left\Vert \mathcal{\tilde{J}}\left( \lambda_{n}-\lambda\right) \right\Vert
_{r}\leqslant2\left\Vert \varphi-\psi_{n}\right\Vert _{r}+B_{r}\left\vert
\left\vert \left\vert \lambda_{n}-\lambda\right\vert \right\vert \right\vert
_{r}\rightarrow0,\;n\rightarrow\infty
\end{gather*}
that proves (\ref{V80f14}). Applying to (\ref{V80f14}) the left-hand bound
in (\ref{V80f12}) we obtain%
\begin{equation*}
A_{r}\left\vert \left\vert \left\vert \lambda\right\vert \right\vert
\right\vert _{r}\leqslant\left\Vert \mathcal{\tilde{J}}\left( \lambda\right)
\right\Vert _{r}=\left\Vert \varphi-\mathbf{M}\varphi\right\Vert
_{r}\leqslant2\left\Vert \varphi\right\Vert _{r}. 
\end{equation*}
This shows the uniqueness of the representation (\ref{V80f14}) and the
boundedness of operator $\mathcal{K}$. This completes the proof
\end{proof}

\subsection{Conjugate operator $\mathcal{J}^{\ast}$}

Let study the structure of operator $\mathcal{J}^{\ast}$. Let $L_{2}^{\bot
}\left( w\right) $ be the orthogonal complement of $L_{2}\left( w\right) $
to Hilbert space $L_{2}$: $L_{2}=L_{2}\left( w\right) \oplus L_{2}^{\bot
}\left( w\right) $.

\begin{theorem}
\textit{The restrictions of the operator }$\mathcal{J}^{\ast}:\left(
CH_{p}\right) ^{\ast}\rightarrow N_{p}^{\ast},\,p\in\left[ 2,\infty\right) $%
\textit{\ on the subspace }$L_{2}\subset\left( CH_{p}\right) ^{\ast}$\textit{%
\ and also on the subspace }$L_{p^{\prime}}\left( w\right) \subset\left(
CH_{p}\right) ^{\ast}$\textit{\ have the form}%
\begin{equation}
\mathcal{J}^{\ast}\left( \chi\right) \left( t,\omega\right) =\int
\limits_{t}^{1}\lambda\left( s,t,\omega\right) \,ds   \label{V80f15}
\end{equation}
\textit{where }$\chi=\chi_{1}+\mathcal{\tilde{J}}\left( \lambda\right) $%
\textit{, }$\chi_{1}\in L_{2}^{\bot}\left( w\right) $\textit{, }$\lambda \in%
\bar{T}_{2}$\textit{, }$\lambda=\mathcal{K}\chi$\textit{.}
\end{theorem}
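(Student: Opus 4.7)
The strategy is to verify the defining adjoint identity
$\langle\chi,\mathcal{J}(\varphi)\rangle = \mathbf{M}\int_0^1 h(t,\omega)\varphi(t,\omega)\,dt$
with $h(t,\omega)=\int_t^1\lambda(s,t,\omega)\,ds$, for every test function $\varphi\in N_p$. To set up the right-hand side I first organize the decomposition of $\chi$. If $\chi\in L_{p'}(w)$, the preceding representation lemma directly produces $\lambda=\mathcal{K}\chi\in\bar T_{p'}$ with $\chi-\mathbf{M}\chi=\tilde{\mathcal{J}}(\lambda)$, and I place the deterministic residue $\mathbf{M}\chi$ into the $\chi_1$-summand. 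If instead $\chi\in L_2$, I first project $\chi$ orthogonally onto the Hilbert subspace $L_2(w)$ to obtain $\chi_2$, apply the representation lemma to $\chi_2$ (giving $\lambda=\mathcal{K}\chi\in\bar T_2$), and collect the orthogonal residue together with the mean into $\chi_1$. Boundedness of $\mathcal{K}$ yields $|||\lambda|||_{p'}\lesssim\|\chi\|$, and Minkowski's inequality applied to $h$ then gives $h\in N_{p'}$ with controlled norm.

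\textbf{Main computation.} Writing $\eta=\tilde{\mathcal{J}}(\lambda)$, Lemma~\ref{ad}(c) yields, for almost every $t\in[0,1]$,
\[
\mathbf{M}\,\eta(t)\,\mathcal{J}(\varphi)(t)=\mathbf{M}\int_0^t\lambda(t,s,\omega)\,\varphi(s,\omega)\,ds.
\]
Integrating in $t\in[0,1]$ and swapping the order of the Lebesgue-type integrations over the triangle $\{0\le s\le t\le 1\}$ by Fubini --- whose hypotheses are provided by $\lambda\in\bar T_2$ together with $\varphi\in N_p$, $p\ge 2$ --- produces
\[
\mathbf{M}\int_0^1\eta(t)\,\mathcal{J}(\varphi)(t)\,dt=\mathbf{M}\int_0^1\varphi(s)\Bigl(\int_s^1\lambda(t,s,\omega)\,dt\Bigr)\,ds,
\]
which is exactly $\langle h,\varphi\rangle$ for $h$ as above. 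Thus the $\tilde{\mathcal{J}}(\lambda)$-summand of $\chi$ contributes the stated formula (\ref{V80f15}).

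\textbf{Vanishing of the $\chi_1$ contribution and main obstacle.} It remains to show $\mathbf{M}\int_0^1\chi_1(t)\,\mathcal{J}(\varphi)(t)\,dt=0$. The deterministic part of $\chi_1$ is killed at once because $\mathcal{J}(\varphi)(t)$ is a zero-mean $\{\mathfrak{F}_t\}$-martingale, so $\mathbf{M}\mathcal{J}(\varphi)(t)=0$. For the remaining $L_2^\perp(w)$-portion I split $\mathcal{J}(\varphi)$ by the $\{\mathfrak{F}_t^w\}$-predictable projection $\widehat\varphi(s,\omega)=\mathbf{M}\{\varphi(s,\omega)\mid\mathfrak{F}_s^w\}$: the piece $\int_0^{\cdot}\widehat\varphi\,dw$ belongs to $L_2(w)$ and is annihilated by orthogonality of $\chi_1$, while the cross-term coming from $\int_0^{\cdot}(\varphi-\widehat\varphi)\,dw$ is extinguished by a Lemma~\ref{L2}-style It\^o-isometry argument, which exploits that $\varphi-\widehat\varphi$ has vanishing $\{\mathfrak{F}_s^w\}$-conditional mean. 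This is the delicate step of the proof, because the orthogonality $\chi_1\perp L_2(w)$ is a global $L_2$ condition, not pointwise in $t$, and $\mathcal{J}(\varphi)$ itself need not lie in $L_2(w)$ when $\varphi$ is only adapted to the larger stream $\{\mathfrak{F}_t\}$; bridging that gap is the heart of the argument. Combining the three ingredients and taking $\mathcal{J}^*$ as the unique $N_{p'}$-element realising the pairing then gives (\ref{V80f15}).
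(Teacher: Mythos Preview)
Your main computation --- applying Lemma~\ref{ad}(c) to $\tilde{\mathcal J}(\lambda)$, integrating in $t$, and swapping by Fubini --- is exactly the paper's argument. The divergence is in how you dispose of the $\chi_1$-summand, and there you have misread the definition of $L_2(w)$. You assert that ``$\mathcal J(\varphi)$ itself need not lie in $L_2(w)$ when $\varphi$ is only adapted to the larger stream $\{\mathfrak F_t\}$''. But in this paper $L_r(w)\equiv\{\tilde{\mathcal J}(\lambda):\lambda\in\bar T_r\}$, and the step functions generating $T_r$ have integrands $\alpha_k$ that are $\mathcal{BF}$-measurable --- i.e.\ progressively measurable with respect to the \emph{large} stream $\{\mathfrak F_t\}$, not $\{\mathfrak F_t^w\}$. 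For any $\varphi\in L_2$, setting $\lambda(t,s,\omega)=\varphi(s,\omega)$ on $\{s\le t\}$ gives an element of $N_2(\mathcal B\times\mathcal{BF})=\bar T_2$ (approximate by $\sum_k\chi_{\Delta_k}(t)\,\varphi(s)\chi_{[0,t_k]}(s)\in T_2$), and then by (\ref{V80f13}) one has $\tilde{\mathcal J}(\lambda)(t)=\int_0^t\varphi\,dw=\mathcal J(\varphi)(t)$. Thus $\mathcal J(\varphi)\in L_2(w)$ always, and $\langle\chi_1,\mathcal J(\varphi)\rangle=0$ follows from plain orthogonality; no predictable-projection splitting is needed.

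Consequently the step you flag as ``the heart of the argument'' is not an obstacle at all, and your proposed mechanism for it is not justified either: the ``Lemma~\ref{L2}-style It\^o-isometry argument'' you invoke for the cross-term $\int_0^{\cdot}(\varphi-\hat\varphi)\,dw$ would require $\chi_1$ to carry a $dw$-integral representation, which an arbitrary element of $L_2^\perp(w)$ does not have; the vanishing of $\mathbf M\{\varphi-\hat\varphi\mid\mathfrak F_s^w\}$ gives no leverage on $\mathbf M\,\chi_1(t)\int_0^t(\varphi-\hat\varphi)\,dw$. In the paper's proof the $L_{p'}(w)$ case has $\chi_1=\mathbf M\chi$ deterministic, which is annihilated by $\mathbf M\mathcal J(\varphi)(t)=0$; the $L_2$ case is then declared ``analogous'' and is dispatched in one line by the orthogonality described above.
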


\begin{proof}
Let first of all prove the theorem for $\chi\in L_{p^{\prime}}\left(
w\right) ,p>2$. Let $\varphi\in N_{p}$. Expanding the function $\chi$ into
the sum (\ref{V80f14}) and taking into account item (b) of Lemma \ref{ad} we
receive on Fubini theorem%
\begin{gather*}
\left\langle \chi,\mathcal{J}\left( \varphi\right) \right\rangle =\mathbf{M}%
\int\limits_{0}^{1}\left( \mathbf{M}\chi\left( t\right) +\mathcal{\tilde{J}}%
\left( \lambda\right) \left( t\right) \right) \mathcal{J}\left(
\varphi\right) \left( t\right) \,dt= \\
=\int\limits_{0}^{1}\mathbf{M}\int\limits_{0}^{t}\lambda\left( t,s\right)
\varphi\left( s\right) \,dsdt=\mathbf{M}\int\limits_{0}^{1}\left(
\int\limits_{t}^{1}\lambda\left( t,s\right) \,ds\right) \varphi\left(
t\right) dt=\left\langle \mathcal{J}^{\ast}\chi,\varphi\right\rangle .\,
\end{gather*}
As $\varphi$ is an arbitrary function, then this completes the proof of
theorem for $p>2$. Case $\chi\in L_{2}$ can be studied analogously. Note,
that if $\chi\in L_{2}\left( w\right) \subset L_{2}\cap L_{p^{\prime}}\left(
w\right) $, then in accordance to (\ref{V80f14}) we have $\chi_{1}=M\chi\in
L_{2}^{\perp}\left( w\right) \,$
\end{proof}

\begin{corollary}
\textit{The restrictions of the operators listed in the theorem are bounded
from }$L_{2}$\textit{\ to }$L_{2}$\textit{\ and from }$L_{q}\left( w\right) $%
\textit{\ to }$L_{q}\left( w\right) $, $q\in\left[ p^{\prime},2\right] $%
\textit{\ respectively.}
\end{corollary}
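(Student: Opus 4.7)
The plan is to combine the explicit formula $\mathcal{J}^{\ast}(\chi)(t,\omega)=\int_{t}^{1}\lambda(s,t,\omega)\,ds$ supplied by the theorem with two applications of H\"older's inequality, and then close the argument via (\ref{V80f12}) and the representation of $\chi$ from the preceding lemma.

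Fix $q\in(1,2]$. My first step would be a pointwise H\"older estimate in the variable $s$, yielding $|\mathcal{J}^{\ast}(\chi)(t,\omega)|^{q}\leq (1-t)^{q-1}\int_{t}^{1}|\lambda(s,t,\omega)|^{q}\,ds\leq \int_{t}^{1}|\lambda(s,t,\omega)|^{q}\,ds$. After taking expectation, integrating in $t$, and swapping the order of integration, the task reduces to bounding $\int_{0}^{s}|\lambda(s,t,\omega)|^{q}\,dt$. A second H\"older step with conjugate exponents $2/q$ and $2/(2-q)$ controls this by $s^{(2-q)/2}(\int_{0}^{s}\lambda^{2}(s,t,\omega)\,dt)^{q/2}\leq (\int_{0}^{s}\lambda^{2}(s,t,\omega)\,dt)^{q/2}$, and the estimate collapses to $\|\mathcal{J}^{\ast}(\chi)\|_{q}\leq |||\lambda|||_{q}$.

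Next I would invoke the left bound of (\ref{V80f12}), $|||\lambda|||_{q}\leq A_{q}^{-1}\|\mathcal{\tilde{J}}(\lambda)\|_{q}$, and finish by choosing the right representation of $\chi$ in each case. For $\chi\in L_{q}(w)$ with $q\in[p',2]$, (\ref{V80f14}) supplies $\chi=\mathbf{M}\chi+\mathcal{\tilde{J}}(\lambda)$, so $\|\mathcal{\tilde{J}}(\lambda)\|_{q}\leq 2\|\chi\|_{q}$ and hence $\|\mathcal{J}^{\ast}(\chi)\|_{q}\leq 2A_{q}^{-1}\|\chi\|_{q}$; the image actually lies in $L_{q}(w)$, not merely in $L_{q}$, because the Ito--Clark construction underlying $\mathcal{K}$ produces a $\lambda(s,t,\omega)$ that is $\mathfrak{F}_{t}^{w}$-measurable in $\omega$ whenever $t\leq s$, a property preserved by the $s$-integral. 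For $\chi\in L_{2}$, the orthogonal decomposition $\chi=\chi_{1}+\mathcal{\tilde{J}}(\lambda)$ with $\chi_{1}\in L_{2}^{\bot}(w)$ gives $\|\mathcal{\tilde{J}}(\lambda)\|_{2}\leq \|\chi\|_{2}$, and since $A_{2}=1$ the operator norm is at most one.

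The main subtlety will be handling the exponent $q/2\leq 1$: a direct Jensen step on the outer integral would run in the wrong direction, so one must perform the $L^{q}$ bound in the inner variable first (where $q\geq 1$ makes H\"older available) and only afterwards pass to the $L^{2}$ form demanded by $|||\cdot|||_{q}$, absorbing the cost into the trivial estimate $s\leq 1$.
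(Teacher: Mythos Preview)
Your argument is correct and matches the paper's proof essentially step for step: H\"older in the $s$-integral, Fubini to swap the order, a second H\"older with exponents $2/q$ and $2/(2-q)$ to reach $|||\lambda|||_{q}$, and then the left bound in (\ref{V80f12}) together with the representation (\ref{V80f14}) to close the estimate. You add two points the paper leaves implicit---why the image actually lands in $L_{q}(w)$, and the sharper constant in the $L_{2}$ case via $A_{2}=1$---but the core route is the same.
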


\begin{proof}
Really, we get from formula (\ref{V80f15}) 
\begin{align*}
\left\Vert \mathcal{J}^{\ast}\chi\right\Vert _{q}^{q} & =\mathbf{M}%
\int\limits_{0}^{1}\left\vert \int\limits_{t}^{1}\lambda\left( s,t\right)
\,ds\right\vert ^{q}dt\leqslant\mathbf{M}\int\limits_{0}^{1}\int
\limits_{t}^{1}\left\vert \lambda\left( s,t\right) \right\vert ^{q}dsdt= \\
& =\mathbf{M}\int\limits_{0}^{1}\int\limits_{0}^{t}\left\vert \lambda\left(
s,t\right) \right\vert ^{q}dsdt\leqslant\left\vert \left\vert \left\vert
\lambda\right\vert \right\vert \right\vert _{q}^{q}\leqslant\left(
2A_{q}\right) ^{-q}\left\Vert \chi\right\Vert _{q}^{q}.
\end{align*}
The first statement of the corollary can be proved analogously
\end{proof}

\section{Operator\ $\mathcal{P}$}

\subsection{Definition of $\mathcal{P}$}

Let $\psi\in L_{2}\left( \Pi\right) $. Since the random functions $\mathcal{P%
}\left( \tilde{\psi}\right) \left( t,\omega\right) $ corresponding to
different members $\tilde{\psi}$ of the class $\psi$ a.s. have trajectories
from $D$ and are indistinguishable from the each other, then it is possible
to define operators $\mathcal{P}:L_{p}\left( \Pi\right) \rightarrow
DH_{p},\;p\geqslant2$.

\begin{lemma}
\textit{Linear operator }$\mathcal{P}$ $:L_{p}\left( \Pi\right) \rightarrow
DH_{p},\;p\geqslant2$ \textit{( }$p$ is an even number\textit{) is bounded. }
\end{lemma}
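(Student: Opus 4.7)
The plan is to mirror the two-step strategy used in the analogous lemma for $\mathcal{J}$: apply Doob's martingale maximal inequality to reduce to the terminal time, then estimate $\mathbf{M}|\mathcal{P}(\alpha)(1)|^{p}$ in terms of $\|\alpha\|_{L_{p}(\Pi)}^{p}$ via a Burkholder-type bound. Since $\alpha\in L_{p}(\Pi)\subset L_{2}(\Pi)$, the process $X_{t}\equiv\mathcal{P}(\alpha)(t,\omega)$ is a martingale, so inequality (\ref{V80f2}) immediately gives
\begin{equation*}
\mathbf{M}\sup_{0\leqslant t\leqslant 1}|X_{t}|^{p}\leqslant (p')^{p}\,\mathbf{M}|X_{1}|^{p},
\end{equation*}
and it remains to establish $\mathbf{M}|X_{1}|^{p}\leqslant C_{p}\|\alpha\|_{L_{p}(\Pi)}^{p}$. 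For $p=2$ this reduces to (\ref{V80f4}).

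For $p=2k\geqslant 4$, the plan is to exploit the fact that $F(x)=x^{p}$ is a polynomial so that It\^o's formula for processes driven by $\tilde{\nu}$ applies without smoothness issues. After discarding the purely discontinuous martingale part (formally via a localization argument, justified a posteriori by the bound itself), the binomial theorem yields
\begin{equation*}
\mathbf{M}X_{1}^{p}=\sum_{j=2}^{p}\binom{p}{j}\,\mathbf{M}\int_{0}^{1}\!\int X_{s-}^{p-j}\alpha^{j}(s,y)\,ds\,\Pi(dy).
\end{equation*}

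Each term of this sum I would bound by $\mathbf{M}\bigl[(\sup_{s}|X_{s}|)^{p-j}\cdot\int_{0}^{1}\!\int|\alpha|^{j}\,ds\,\Pi(dy)\bigr]$, and then apply Young's inequality $ab\leqslant \epsilon_{j}a^{p/(p-j)}+C_{\epsilon_{j}}b^{p/j}$ under the expectation. The $\sup|X|^{p}$ part can be absorbed into the left-hand side via $\mathbf{M}\sup|X|^{p}\leqslant(p')^{p}\mathbf{M}|X_{1}|^{p}$ by choosing $\sum_{j}\epsilon_{j}$ sufficiently small. For the remaining contribution $\mathbf{M}(\int_{0}^{1}\!\int|\alpha|^{j}\,ds\,\Pi(dy))^{p/j}$, I would use the elementary pointwise bound $|\alpha|^{j}\leqslant|\alpha|^{2}+|\alpha|^{p}$ valid for $2\leqslant j\leqslant p$ together with Jensen's inequality on $[0,1]$ (which upgrades $(\int_{0}^{1}g\,ds)^{p/2}$ to $\int_{0}^{1}g^{p/2}\,ds$) so that everything is dominated by $\|\alpha\|_{L_{p}(\Pi)}^{p}$.

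The main obstacle is the bookkeeping of the cross terms $2<j<p$: one must split $|\alpha|^{j}$ so as to recover precisely the two ingredients of the $L_{p}(\Pi)$-norm, the $\int|\alpha|^{p}\Pi$ part and the $(\int|\alpha|^{2}\Pi)^{p/2}$ part. Verifying that the localization required to kill the expectation of the compensated-Poisson martingale is compatible with the only a priori bound $\alpha\in L_{p}(\Pi)$ is a secondary technical point. An alternative route that sidesteps much of the Young-inequality juggling is induction on the even integer $p$: the $(p-2)$-case dominates the factors $X_{s-}^{p-j}$ directly without passing to the supremum, giving a cleaner recursive estimate.
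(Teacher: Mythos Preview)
Your overall architecture is sound and runs parallel to the paper's: the paper also reduces everything to a bound of the form $\mathbf{M}|\mathcal{P}(\alpha)(t)|^{p}\leqslant K\,\mathbf{M}\int_{0}^{t}\sum_{k=2}^{p}v_{k}^{p/k}(s)\,ds$ with $v_{k}(s)=\int|\alpha(s,y)|^{k}\Pi(dy)$ (it quotes this from Gihman--Skorohod rather than deriving it via It\^o's formula as you do), and then applies Doob's inequality. The substantive work in both proofs is the passage from the mixed moments $v_{k}^{p/k}$ to the two pieces of the $L_{p}(\Pi)$-norm.

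Here your proposed step has a genuine gap. The pointwise bound $|\alpha|^{j}\leqslant|\alpha|^{2}+|\alpha|^{p}$ gives $\int_{0}^{1}\!\int|\alpha|^{j}\leqslant U_{2}+U_{p}$ with $U_{k}=\int_{0}^{1}\!\int|\alpha|^{k}\Pi\,ds$, hence after raising to the power $p/j$ you face $\mathbf{M}\,U_{p}^{p/j}$ for $2\leqslant j<p$. This quantity is \emph{not} controlled by $\|\alpha\|_{L_{p}(\Pi)}^{p}$: the norm contains only $\mathbf{M}U_{p}$, and in general $\mathbf{M}U_{p}^{p/j}$ can be infinite even when $\mathbf{M}U_{p}<\infty$ (take $\alpha$ independent of $s,y$ with $\Pi$ finite). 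The split destroys homogeneity, and Jensen on $[0,1]$ cannot repair it. What the paper does instead is interpolate in the $\Pi$-integral by H\"older: writing $|\alpha|^{k}=|\alpha|^{2\theta}|\alpha|^{p(1-\theta)}$ with $\theta=(p-k)/(p-2)$ yields $v_{k}\leqslant v_{2}^{\theta}v_{p}^{1-\theta}$, hence $v_{k}^{p/k}\leqslant (v_{2}^{p/2})^{a}v_{p}^{b}$ with $a+b=1$; a second H\"older in $s$ and the elementary bound $A^{a}B^{b}\leqslant(A^{1/m}+B^{1/m})^{m}\leqslant 2^{m-1}(A+B)$ then give $\mathbf{M}\int_{0}^{t}v_{k}^{p/k}\,ds\leqslant C\,\mathbf{M}\int_{0}^{t}\bigl(v_{2}^{p/2}+v_{p}\bigr)\,ds=C\,\|\alpha\|_{L_{p}(\Pi)}^{p}$. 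Replacing your pointwise split by this H\"older interpolation closes the argument; everything else in your outline (It\^o formula, localization to kill the martingale part, absorption via Young with small $\epsilon_{j}$) is correct.
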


\begin{proof}
For $p=2$ the lemma follows from (\ref{V80f4}). Let $p>2$. Prove the
boundedness of operator $\mathcal{P}$ $:L_{p}\left( \Pi\right) \rightarrow
DH_{p}$. Let use the estimations from \cite[p. 150]{V806}. For $\alpha\in
L_{p}\left( \Pi\right) $%
\begin{align*}
\mathbf{M}\left\vert \mathcal{P}\left( \alpha\right) \left( t\right)
\right\vert ^{p} & \leqslant K_{1}\left( p\right) \mathbf{M}\int
\limits_{0}^{t}\sum\limits_{k=2}^{p}v_{k}^{p/k}\left( s\right) \,ds, \\
v_{k}\left( s\right) & \equiv\int\left\vert \alpha\left( s,y\right)
\right\vert ^{k}\Pi\left( dy\right) .
\end{align*}
Using H\"{o}lder inequality and binomial formula of Newton we get%
\begin{gather*}
\mathbf{M}\left\vert \mathcal{P}\left( \alpha\right) \left( t\right)
\right\vert ^{p}\leqslant K_{2}\left( p\right) \mathbf{M}\sum%
\limits_{k=2}^{p}\left( \int\limits_{0}^{t}v_{2}^{p/2}\left( s\right)
\,ds\right) ^{\frac{2\left( p-k\right) }{k\left( p-2\right) }}\left( \int
\limits_{0}^{t}v_{p}\left( s\right) \,ds\right) ^{\frac{p\left( k-2\right) }{%
k\left( p-2\right) }}\leqslant \\
\leqslant K_{2}\left( p\right) \mathbf{M}\sum\limits_{k=2}^{p}\left[ \left(
\int\limits_{0}^{t}v_{2}^{p/2}\left( s\right) \,ds\right) ^{\frac{1}{k\left(
p-2\right) }}+\left( \int\limits_{0}^{t}v_{p}\left( s\right) \,ds\right) ^{%
\frac{1}{k\left( p-2\right) }}\right] ^{\left( p-2\right) k}\leqslant \\
\leqslant K_{2}\left( p\right) \sum\limits_{k=2}^{p}2^{\left( p-2\right) k-1}%
\mathbf{M}\int\limits_{0}^{t}\left[ v_{2}^{p/2}\left( s\right) +v_{p}\left(
s\right) \right] \,ds\leqslant K_{p}^{p}\left\Vert \alpha\right\Vert
_{L_{p}\left( \Pi\right) }^{p}.
\end{gather*}
Hence, 
\begin{equation*}
\left\vert \mathcal{P}\left( \alpha\right) \left( t\right) \right\vert
_{p}\leqslant K_{p}\left\Vert \alpha\right\Vert _{L_{p}\left( \Pi\right) }
\end{equation*}
where $K_{1}\left( p\right) ,K_{2}\left( p\right) ,K_{p}$ are functions of $p
$. Because $\mathcal{P}\left( \alpha\right) \left( t\right) $ is martingale,
then according to (\ref{V80f2}) 
\begin{equation*}
\left\vert \mathcal{P}\left( \alpha\right) \right\vert _{p}^{\left( d\right)
}\leqslant p^{\prime}\left\vert \mathcal{P}\left( \alpha\right) \right\vert
_{p}\leqslant p^{\prime}K_{p}\left\Vert \alpha\right\Vert _{L_{p}\left(
\Pi\right) }, 
\end{equation*}
that completes the proof
\end{proof}

\subsection{Operator $\tilde{P}$}

To study the structure of operator $\mathcal{P}^{\ast}$ let define in
analogous to $N_{2}\left( \mathcal{B}\times\mathcal{BF}\right) $ the Hilbert
space $N_{2}\left( \mathcal{B}\times\mathcal{BF}\times\mathcal{B}^{l}\right) 
$ of $\mathcal{B}\times\mathcal{BF}\times\mathcal{B}^{l}$-measurable
functions $\mu\left( t,s,\omega,y\right) $, $t\in\left[ 0,1\right] $, $s\in%
\left[ 0,t\right] $, $\omega\in\Omega$, $y\in R^{l}$ (classes of equivalent
functions with respect to measure $\limfunc{mes}\times \limfunc{mes}\times%
\mathbf{P}\times\Pi$) with the norm%
\begin{equation*}
\left\vert \left\vert \left\vert \mu\right\vert \right\vert \right\vert
_{\Pi }=\left\{ \mathbf{M}\int\limits_{0}^{1}\int\limits_{0}^{t}\int\mu^{2}%
\left( t,s,\omega,y\right) \,\Pi\left( dy\right) dsdt\right\} ^{1/2}. 
\end{equation*}
In analogous to $T_{r}$ and $\mathcal{\tilde{J}}$ define the lineal $%
T_{2}\left( \nu\right) \subset N_{2}\left( \mathcal{B}\times \mathcal{BF}%
\times\mathcal{B}^{l}\right) $ and the operator $\mathcal{\tilde {P}}$ on
it; then extend this operator to the closer $\overline{T_{2}\left(
\nu\right) }$ by the continuity. Instead (\ref{V80f12}) we use now the
equality%
\begin{equation*}
\left\Vert \mathcal{\tilde{P}}\left( \mu\right) \right\Vert _{2}=\left\vert
\left\vert \left\vert \mu\right\vert \right\vert \right\vert _{\Pi},\;\mu\in
T_{2}\left( \nu\right) . 
\end{equation*}

Proofs of the next sentences are similar to the proofs of lemmas \ref{Lr}
and \ref{ad}.

\begin{lemma}
\label{adP}\textsf{\ }\textit{Let }$\mu\in\overline{T_{2}\left( \nu\right) }$%
\textit{, }$\eta\left( t\right) =\mathcal{\tilde{P}}\left( \psi\right)
\left( t\right) $\textit{. Then}
\end{lemma}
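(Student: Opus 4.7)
The plan is to follow the template of Lemma \ref{ad} line by line, substituting the centered Poisson integral $\int\cdot\,\tilde{\nu}(ds,dy)$ for the Ito integral $\int\cdot\,dw$ throughout, and using the isometry $\|\mathcal{\tilde{P}}(\mu)\|_2 = |||\mu|||_\Pi$ (which is built into the definition of $\mathcal{\tilde{P}}$ on $\overline{T_2(\nu)}$ via (\ref{V80f4})) in place of the two-sided bound (\ref{V80f12}). Since this relation is an equality rather than equivalent norms with constants $A_r, B_r$, the approximation steps are actually slightly cleaner than in the Wiener case, and there is no need for any restriction on the integrability exponent.

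The first step is to establish the pointwise-in-$t$ identification analogous to (\ref{V80f13}):
\begin{equation*}
\eta(t,\omega) = \int_0^t\!\!\int \mu(t,s,\omega,y)\,\tilde{\nu}(ds,dy) \quad \text{a.s.}
\end{equation*}
for almost every $t\in[0,1]$. For $\mu\in T_2(\nu)$ this is immediate from the explicit definition of $\mathcal{\tilde{P}}$ on step functions. For general $\mu\in\overline{T_2(\nu)}$ I would pick an approximating sequence $\mu_n\in T_2(\nu)$, pass to a subsequence so that $\eta_{n_k}(t)\to\eta(t)$ in $L^2(\Omega)$ for a.e.\ $t$, and then to a subsubsequence along which the Poisson-integral side also converges in $L^2(\Omega)$ for a.e.\ $t$ — quantitative control for the second convergence is provided by (\ref{V80f4}). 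Once this identification is in hand, items (a), (b), (c) are immediate from standard properties of Poisson-measure integrals: (a) is the zero-mean property of $\int_0^t\!\!\int\mu\,\tilde{\nu}(ds,dy)$; (b) uses $\mathcal{F}_s$-measurability of the truncated integral together with independence of future Poisson increments from $\mathcal{F}_s$, which kills the contribution of the interval $(s,t]$ under conditioning; (c) is the conditional form of the $L_2$-isometry (\ref{V80f4}) applied to the product of two Poisson-measure integrals that share integration variables $(s,y)$.

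For the analogue of item (d), the identity on $T_2(\nu)$ reduces to a finite-sum Fubini computation identical in form to the Wiener version, and both sides extend continuously in the $|||\cdot|||_\Pi$ norm — the right-hand side by the isometry, the left-hand side by the boundedness of $\eta\mapsto\int_0^\tau\eta\,ds$ on $L^2$. The main obstacle, as in Lemma \ref{ad}, will be the joint-measurability bookkeeping: one must select versions of the pointwise-in-$t$ Poisson integrals that are measurable in $(t,\omega)$ so that the identification with $\eta$ holds on a set of full $\limfunc{mes}\times\mathbf{P}$-measure, rather than merely at each fixed $t$ separately. The Fubini-measurability argument already used in the proof of the restriction form (\ref{V80f15}), combined with the subsequence-selection trick from the proof of (\ref{V80f13}), should together handle this cleanly.
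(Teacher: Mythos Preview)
Your proposal is correct and matches the paper's own approach exactly: the paper does not give a separate proof of Lemma~\ref{adP} but simply states that its proof (together with that of Lemma~\ref{L2P}) is ``similar to the proofs of lemmas \ref{Lr} and \ref{ad}.'' Your outline---carry over the argument of Lemma~\ref{ad} verbatim, replacing $dw$ by $\tilde{\nu}(ds,dy)$ and the two-sided bound (\ref{V80f12}) by the exact isometry coming from (\ref{V80f4})---is precisely what the paper intends, and your remarks about the approximation being slightly cleaner because the isometry is an equality are apt.
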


\begin{description}
\item[a)] $\mathbf{M}\eta\left( t\right) =0$ for almost all\textit{\ }$t\in%
\left[ 0,1\right] ;$

\item[b)] $\forall t\in\left[ 0,1\right] $ and almost all\textit{\ }$s\in%
\left[ t,1\right] $ \textit{a.s.} 
\begin{equation*}
\mathbf{M}\left\{ \left. \eta\left( s\right) \right\vert \mathcal{F}%
_{t}\right\} =\int\limits_{0}^{t}\int\mu\left( s,\tau,\omega,y\right) \,%
\tilde{\nu}\left( d\tau,dy\right) ; 
\end{equation*}

\item[c)] if\textit{\ }$\alpha\in L_{2}\left( \Pi\right) $\textit{, then for
almost all }$t$%
\begin{equation*}
\mathbf{M}\eta\left( t\right) \mathcal{P}\left( \alpha\right) \left(
t\right) =\mathbf{M}\int\limits_{0}^{t}\int\mu\left( t,\tau,\omega,y\right)
\alpha\left( \tau,\omega,y\right) \Pi\left( dy\right) d\tau; 
\end{equation*}

\item[d)] $\forall\tau\in\left( 0,1\right] $ \textit{a.s.} 
\begin{equation*}
\int\limits_{0}^{\tau}\eta\left( s\right)
\,ds=\int\limits_{0}^{\tau}\int\left( \int\limits_{t}^{\tau}\mu\left(
s,t,\omega,y\right) \,ds\right) \tilde{\nu}\left( dt,dy\right) . 
\end{equation*}
\end{description}

\begin{lemma}
\label{L2P}The set\textit{\ }$L_{2}\left( \nu\right) \equiv\left\{ \mathcal{%
\tilde{P}}\left( \mu\right) ,\mu\in\overline{T_{2}\left( \nu\right) }%
\right\} $ is a closed subset of\textit{\ }$L_{2}$\textit{.}
\end{lemma}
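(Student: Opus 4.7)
The plan is to follow the proof of Lemma \ref{Lr} essentially verbatim, replacing the two-sided bound (\ref{V80f12}) by the stronger isometric identity $\|\mathcal{\tilde{P}}(\mu)\|_2 = |||\mu|||_\Pi$ that the paper records for $\mu \in T_2(\nu)$ and that extends to $\overline{T_2(\nu)}$ by continuity (indeed, this is precisely how $\mathcal{\tilde{P}}$ was defined on the closure).

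First I would take an arbitrary sequence $\{\xi_n = \mathcal{\tilde{P}}(\mu_n)\} \subset L_2(\nu)$, with $\mu_n \in \overline{T_2(\nu)}$, that is Cauchy in $L_2$. The isometric identity then gives
\[
|||\mu_n - \mu_m|||_\Pi = \|\mathcal{\tilde{P}}(\mu_n - \mu_m)\|_2 = \|\xi_n - \xi_m\|_2 \to 0
\]
as $n,m \to \infty$, so $\{\mu_n\}$ is Cauchy in the Banach space $N_2(\mathcal{B} \times \mathcal{BF} \times \mathcal{B}^l)$ and hence converges to some $\mu$ there. Because $\overline{T_2(\nu)}$ is closed by construction, $\mu \in \overline{T_2(\nu)}$.

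Next I would set $\xi := \mathcal{\tilde{P}}(\mu) \in L_2(\nu)$ and apply the isometry once more:
\[
\|\xi_n - \xi\|_2 = \|\mathcal{\tilde{P}}(\mu_n - \mu)\|_2 = |||\mu_n - \mu|||_\Pi \to 0,
\]
so the Cauchy sequence $\{\xi_n\}$ has its $L_2$-limit $\xi$ inside $L_2(\nu)$, which is exactly the closedness claim.

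I do not expect any real obstacle: since the Poisson stochastic integral yields an exact isometry (rather than the two-sided $A_r, B_r$ bound used for the Wiener case), the argument is even cleaner than in Lemma \ref{Lr}. The only point to keep in mind is that the isometry must be used on elements of the full completion $\overline{T_2(\nu)}$, but this is automatic from the continuous extension of $\mathcal{\tilde{P}}$ to the closure, so no additional approximation step is required.
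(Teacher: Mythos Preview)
Your proof is correct and is exactly the argument the paper intends: the paper does not spell out a separate proof of Lemma~\ref{L2P} but simply notes that it is proved similarly to Lemma~\ref{Lr}, with the isometric identity $\|\mathcal{\tilde{P}}(\mu)\|_2 = |||\mu|||_\Pi$ taking the place of the two-sided bound~(\ref{V80f12}). Your write-up faithfully carries out that substitution and is in fact slightly cleaner, since the exact isometry removes the need for the constants $A_r$, $B_r$.
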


\subsection{Conjugate operator $\mathcal{P}^{\ast}$}

Let $L_{2}^{\perp}\left( \nu\right) $ is the orthogonal complement of $%
L_{2}\left( \nu\right) $ to Hilbert space $L_{2}=L_{2}\left( \nu\right)
\oplus L_{2}^{\perp}\left( \nu\right) $. Using lemmas \ref{adP},\ref{L2P},
we can prove the following theorem.

\begin{theorem}
The restriction of operator\textit{\ }$\mathcal{P}^{\ast}:\left(
DH_{p}\right) ^{\ast}\rightarrow\left( L_{p}\left( \Pi\right) \right)
^{\ast},\,p\geqslant2$ \textit{( }$p$ \textit{is an even number) to the
space }$L_{2}\subset\left( DH_{p}\right) ^{\ast}$ is bounded from\textit{\ }$%
L_{2}$ \textit{to }$L_{2}\left( \Pi\right) $; it can be represented in the
form%
\begin{equation*}
\mathcal{P}^{\ast}\left( \chi\right) \left( t,\omega,y\right)
=\int\limits_{t}^{1}\mu\left( s,t,\omega,y\right) \,ds, 
\end{equation*}
\textit{\ where }$\chi=\chi_{1}+\mathcal{\tilde{P}}\left( \mu\right) $%
\textit{, }$\chi_{1}\in L_{2}^{\perp}\left( \nu\right) $\textit{, }$\mu \in%
\overline{T_{2}\left( \nu\right) }$\textit{.}
\end{theorem}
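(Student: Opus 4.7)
The plan is to parallel the preceding theorem for $\mathcal{J}^{\ast}$, substituting the Hilbert-space splitting $L_{2}=L_{2}(\nu)\oplus L_{2}^{\perp}(\nu)$ (furnished by Lemma~\ref{L2P}) for the Ito splitting, and Lemma~\ref{adP} for Lemma~\ref{ad}.

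First, given $\chi\in L_{2}$, I would write its orthogonal decomposition $\chi=\chi_{1}+\tilde{\mathcal{P}}(\mu)$ with $\chi_{1}\in L_{2}^{\perp}(\nu)$ and $\mu\in\overline{T_{2}(\nu)}$; the isometry $\|\tilde{\mathcal{P}}(\mu)\|_{2}=|||\mu|||_{\Pi}$ pins down $\mu$ uniquely and gives $|||\mu|||_{\Pi}\le\|\chi\|_{2}$. I then introduce the candidate
$$\tilde h(t,\omega,y) := \int_{t}^{1}\mu(s,t,\omega,y)\,ds$$
using a $\mathcal{BF}\times\mathcal{B}^{l}$-measurable selection (by Fubini measurability, exactly as in the $\mathcal{L}^{\ast}$ theorem). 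Cauchy--Schwarz in $s$ together with the definition of $\|\cdot\|_{L_{2}(\Pi)}$ give
$$\|\tilde h\|_{L_{2}(\Pi)}^{2}\le 2\,\mathbf{M}\!\int_{0}^{1}\!\!\int_{0}^{1}\!\!\int \mu^{2}(s,t,\omega,y)\,\Pi(dy)\,ds\,dt = 2|||\mu|||_{\Pi}^{2}\le 2\|\chi\|_{2}^{2},$$
which simultaneously establishes $\tilde h\in L_{2}(\Pi)$ and the claimed boundedness from $L_{2}$ to $L_{2}(\Pi)$.

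To identify $\mathcal{P}^{\ast}(\chi)$ with $\tilde h$, I would pair against arbitrary $\alpha\in L_{p}(\Pi)$. The crucial observation is $\mathcal{P}(\alpha)\in L_{2}(\nu)$: taking $\tilde\alpha(t,s,\omega,y):=\alpha(s,\omega,y)$ (constant in $t$), simple-function approximation (the Poisson analogue of the construction with $S_{r}$) places $\tilde\alpha$ in $\overline{T_{2}(\nu)}$ and one checks $\mathcal{P}(\alpha)=\tilde{\mathcal{P}}(\tilde\alpha)$. Consequently $\langle\chi_{1},\mathcal{P}(\alpha)\rangle=0$, and Lemma~\ref{adP}(c) combined with Fubini yields
\begin{align*}
\langle\chi,\mathcal{P}(\alpha)\rangle
 &= \mathbf{M}\!\int_{0}^{1}\tilde{\mathcal{P}}(\mu)(t)\,\mathcal{P}(\alpha)(t)\,dt \\
 &= \mathbf{M}\!\int_{0}^{1}\!\!\int_{0}^{t}\!\!\int \mu(t,\tau,\omega,y)\,\alpha(\tau,\omega,y)\,\Pi(dy)\,d\tau\,dt \\
 &= \mathbf{M}\!\int_{0}^{1}\!\!\int \alpha(\tau,\omega,y)\Bigl(\int_{\tau}^{1}\mu(t,\tau,\omega,y)\,dt\Bigr)\Pi(dy)\,d\tau,
\end{align*}
which is precisely $\langle\tilde h,\alpha\rangle_{L_{2}(\Pi)}$; since $\alpha$ is arbitrary, $\mathcal{P}^{\ast}(\chi)=\tilde h$.

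I expect the main obstacle to be the verification that $\mathcal{P}(\alpha)\in L_{2}(\nu)$, since it is exactly what makes the $\chi_{1}$ component disappear from the pairing. It requires both a density statement for simple functions in the Poisson analogue of $L_{2}(w)$ and the fact that the constant-in-$t$ extension $\tilde\alpha$ belongs to $\overline{T_{2}(\nu)}$, which in turn needs $\alpha\in L_{2}(\Pi)$ in the natural sense. The embedding $L_{p}(\Pi)\subset L_{2}(\Pi)$ (valid for $p\ge 2$ by Jensen applied to the $(\int|\alpha|^{2}\Pi)^{p/2}$ term) supplies this, after which the Fubini swap is absolutely justified by $|||\mu|||_{\Pi}<\infty$ and $\alpha\in L_{2}(\Pi)$.
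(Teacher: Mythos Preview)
Your proposal is correct and matches the paper's intended argument: the paper does not spell out a proof of this theorem, merely stating that it follows from Lemmas~\ref{adP} and~\ref{L2P} by analogy with the $\mathcal{J}^{\ast}$ theorem, and your write-up is exactly that analogy carried out in full (including the boundedness estimate, which parallels the Corollary after the $\mathcal{J}^{\ast}$ theorem). The point you flag as the main obstacle---that $\mathcal{P}(\alpha)\in L_{2}(\nu)$ so that the $\chi_{1}$ component drops out---is indeed the one step the paper leaves entirely implicit, and your constant-in-$t$ extension $\tilde\alpha$ together with the embedding $L_{p}(\Pi)\subset L_{2}(\Pi)$ for $p\geqslant 2$ handles it correctly.
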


\section{Some relations between operators\ $\mathcal{L}$ , $\mathcal{J}$ and 
$\mathcal{P}$}

Hilbert spaces $\mathcal{M}$ and $L_{2}$ can be expand to the direct sums of
mutually orthogonal closed subspaces:%
\begin{gather*}
\mathcal{M}=\mathcal{M}_{2}^{w}\oplus\mathcal{M}_{2}^{\nu}\oplus \mathcal{M}%
_{2}^{\perp}, \\
L_{2}=L_{2}\left( w\right) \oplus L_{2}^{\perp}\left( w\right) =L_{2}\left(
\nu\right) \oplus L_{2}^{\perp}\left( \nu\right) =L_{2}\left( w\right)
\oplus L_{2}\left( \nu\right) \oplus L_{2}^{\perp}.
\end{gather*}

\begin{lemma}
\textit{Let} $\chi\in L_{2}$\textit{, }$\theta\equiv\mathcal{L}^{\ast}\left(
\chi\right) $\textit{, }$\varkappa\equiv\mathcal{J}^{\ast}\left( \chi\right) 
$\textit{, }$\alpha\equiv\mathcal{P}^{\ast}\left( \chi\right) $\textit{.
Then there exist a martingale }$h\in\mathcal{M}_{2}^{\perp}$\textit{\ such
that}%
\begin{equation*}
\theta\left( t\right) =-\int\limits_{0}^{t}\chi\left( s\right)
ds+\int\limits_{0}^{t}\varkappa\left( s\right) dw\left( s\right)
+\int\limits_{0}^{t}\int\alpha\left( s,y\right) d\tilde{\nu}\left(
ds,dy\right) +h\left( t\right) ,\;t\in\left[ 0,1\right] . 
\end{equation*}
\end{lemma}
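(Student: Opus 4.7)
The strategy is to reduce the decomposition of $\theta = \mathcal{L}^{\ast}(\chi)$ to the orthogonal (Kunita--Watanabe type) representation of the square-integrable martingale
\begin{equation*}
M(t) \equiv \mathbf{M}\Big\{\int_{0}^{1}\chi(s)\,ds \,\Big|\, \mathfrak{F}_{t}\Big\}.
\end{equation*}
By the explicit formula (\ref{V80f9}) for $\mathcal{L}^{\ast}$,
\begin{equation*}
\theta(t) = M(t) - \int_{0}^{t}\chi(s)\,ds,
\end{equation*}
so it suffices to write $M(t)$ as the sum of a Wiener stochastic integral with integrand $\varkappa$, a centred Poisson integral with integrand $\alpha$, and a martingale in $\mathcal{M}_{2}^{\perp}$.

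First, I invoke the orthogonal decomposition $L_{2} = L_{2}(w)\oplus L_{2}(\nu)\oplus L_{2}^{\perp}$ to split $\chi = \chi_{w}+\chi_{\nu}+\chi^{\perp}$ with $\chi_{w} = \mathcal{\tilde{J}}(\lambda)$ for some $\lambda\in\bar{T}_{2}$ and $\chi_{\nu}=\mathcal{\tilde{P}}(\mu)$ for some $\mu\in\overline{T_{2}(\nu)}$. The formulas for $\mathcal{J}^{\ast}$ and $\mathcal{P}^{\ast}$ just proved then give $\varkappa(t)=\int_{t}^{1}\lambda(s,t,\omega)\,ds$ and $\alpha(t,y)=\int_{t}^{1}\mu(s,t,\omega,y)\,ds$. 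Applying item (d) of Lemma~\ref{ad} with $\tau=1$ to $\chi_{w}$, and its Poisson analogue item (d) of Lemma~\ref{adP} to $\chi_{\nu}$, I obtain
\begin{equation*}
\int_{0}^{1}\chi_{w}(s)\,ds = \int_{0}^{1}\varkappa(t)\,dw(t), \qquad \int_{0}^{1}\chi_{\nu}(s)\,ds = \int_{0}^{1}\!\int \alpha(t,y)\,\tilde{\nu}(dt,dy).
\end{equation*}
Conditioning the identity $\int_{0}^{1}\chi\,ds = \int_{0}^{1}\chi_{w}\,ds+\int_{0}^{1}\chi_{\nu}\,ds+\int_{0}^{1}\chi^{\perp}\,ds$ on $\mathfrak{F}_{t}$, and using that the two stochastic integrals are martingales, yields
\begin{equation*}
M(t) = \int_{0}^{t}\varkappa\,dw + \int_{0}^{t}\!\int \alpha(s,y)\,\tilde{\nu}(ds,dy) + h(t),\qquad h(t)\equiv\mathbf{M}\Big\{\int_{0}^{1}\chi^{\perp}(s)\,ds\,\Big|\,\mathfrak{F}_{t}\Big\}.
\end{equation*}

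Second, I verify that $h\in\mathcal{M}_{2}^{\perp}$. For any $\psi\in N_{2}$ the process $\mathcal{J}(\psi)$ belongs to $L_{2}(w)$ (choose the $t$-independent lift $\lambda(t,s,\omega)=\psi(s,\omega)$ in the definition of $\mathcal{\tilde{J}}$), so by Fubini and the tower property
\begin{equation*}
\mathbf{M}\Big[h(1)\int_{0}^{1}\psi\,dw\Big] = \int_{0}^{1}\mathbf{M}\big[\chi^{\perp}(s)\,\mathcal{J}(\psi)(s)\big]\,ds = 0,
\end{equation*}
the last equality because $\chi^{\perp}\perp L_{2}(w)$. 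An entirely analogous computation yields $\mathbf{M}[h(1)\,\mathcal{P}(\beta)(1)]=0$ for any admissible $\beta\in L_{2}(\Pi)$, so $h\perp\mathcal{M}_{2}^{w}\cup\mathcal{M}_{2}^{\nu}$, i.e.\ $h\in\mathcal{M}_{2}^{\perp}$. Substituting the representation of $M(t)$ into $\theta(t)=M(t)-\int_{0}^{t}\chi\,ds$ produces the stated identity.

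The main obstacle is the stochastic-Fubini step: pinning down the Wiener and Poisson integrands in the representation of $M(t)$ as exactly $\varkappa$ and $\alpha$ rests on the identities of item (d) in Lemmas~\ref{ad} and~\ref{adP}, combined with the formulas for $\mathcal{J}^{\ast}$ and $\mathcal{P}^{\ast}$. Once this identification is in place, the orthogonality properties of $\chi^{\perp}$ make the residual martingale $h$ automatic.
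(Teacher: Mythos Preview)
Your argument is correct, but it takes a different route from the paper's. Both proofs start from the same identity $\theta(t)=\mu(t)-\int_{0}^{t}\chi\,ds$ with $\mu(t)=\mathbf{M}\{\int_{0}^{1}\chi\,ds\mid\mathfrak{F}_{t}\}$. The paper then decomposes $\mu$ directly via the orthogonal splitting $\mathcal{M}_{2}=\mathcal{M}_{2}^{w}\oplus\mathcal{M}_{2}^{\nu}\oplus\mathcal{M}_{2}^{\perp}$, obtaining $\mu=\mathcal{J}(\varkappa_{1})+\mathcal{P}(\alpha_{1})+h$ with $h\in\mathcal{M}_{2}^{\perp}$ for free, and identifies $\varkappa_{1}=\mathcal{J}^{\ast}(\chi)$ by the bare definition of the adjoint: for every $\varphi\in L_{2}$ one computes $\langle\varkappa_{1},\varphi\rangle=\mathbf{M}\mu(1)\int_{0}^{1}\varphi\,dw=\langle\chi,\mathcal{J}(\varphi)\rangle$ using only the It\^o isometry and the tower property. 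Neither the explicit formula (\ref{V80f15}) nor item~(d) of Lemma~\ref{ad} is invoked. You instead split $\chi$ in $L_{2}=L_{2}(w)\oplus L_{2}(\nu)\oplus L_{2}^{\perp}$, read off the integrands from the explicit representations of $\mathcal{J}^{\ast}$ and $\mathcal{P}^{\ast}$ combined with the stochastic-Fubini identities of Lemmas~\ref{ad}(d) and~\ref{adP}(d), and then must verify separately that the residual martingale lies in $\mathcal{M}_{2}^{\perp}$.

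The paper's route is shorter and more self-contained: the membership $h\in\mathcal{M}_{2}^{\perp}$ is automatic, and the identification step uses nothing beyond the It\^o isometry. Your route is more constructive and shows explicitly how the earlier machinery (the operators $\tilde{\mathcal{J}}$, $\tilde{\mathcal{P}}$, the formula for $\mathcal{J}^{\ast}$, item~(d)) assembles into the decomposition. One minor point worth tightening: the parenthetical ``choose the $t$-independent lift $\lambda(t,s,\omega)=\psi(s,\omega)$'' used to place $\mathcal{J}(\psi)$ in $L_{2}(w)$ is correct, but you should note that this $\lambda$ lies in $\bar{T}_{2}$ (approximate by step functions in $t$) and that (\ref{V80f13}) then yields $\tilde{\mathcal{J}}(\lambda)=\mathcal{J}(\psi)$ for almost all $t$.
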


\begin{proof}
Using formula (\ref{V80f10}) we expand martingale $\mu\in\mathcal{M}_{2}$ to
the sum%
\begin{equation*}
\mu\left( t\right) =\mathcal{J}\left( \varkappa_{1}\right) \left( t\right) +%
\mathcal{P}\left( \alpha_{1}\right) \left( t\right) +h\left( t\right) , 
\end{equation*}
where $\varkappa_{1}\in L_{2},\;\alpha_{1}\in L_{2}\left( \Pi\right) $, $h\in%
\mathcal{M}_{2}^{\perp}$. Let show that $\varkappa_{1}=\varkappa$, $%
\alpha_{1}=\alpha$. Using Fubini theorem we get for any $\varphi\in L_{2}$%
\begin{align*}
\left\langle \varkappa_{1},\varphi\right\rangle & =\mathbf{M}\int
\limits_{0}^{1}\varkappa_{1}\varphi\,ds=\mathbf{M}\int\limits_{0}^{1}%
\varkappa_{1}dw\int\limits_{0}^{1}\varphi\,dw=\mathbf{M}\mu\left( 1\right)
\int\limits_{0}^{1}\varphi\,dw= \\
& =\mathbf{M}\int\limits_{0}^{1}\chi ds\int\limits_{0}^{1}\varphi
\,dw=\int\limits_{0}^{1}\mathbf{MM}\left\{ \left. \chi\left( s\right)
\int\limits_{0}^{1}\varphi\,dw\right\vert \mathcal{F}_{s}\right\} ds= \\
& =\mathbf{M}\int\limits_{0}^{1}\chi\left( s\right)
\int\limits_{0}^{s}\varphi\,dwds=\left\langle \chi,\mathcal{J}\left(
\varphi\right) \right\rangle .
\end{align*}
This implies $\varkappa_{1}=\mathcal{J}^{\ast}\left( \chi\right) =\varkappa $%
. The equality $\alpha_{1}=\alpha$ can be established analogously
\end{proof}

\begin{lemma}
\textit{Let} $\mathcal{F}_{t}\equiv\mathcal{F}_{t}^{w}$\textit{, }$\chi\in
L_{r}$,$\,r\in\left( 1,2\right] $\textit{, }$\theta=\mathcal{L}^{\ast
}\left( \chi\right) $\textit{, }$\varkappa=\mathcal{J}^{\ast}\left(
\chi\right) $\textit{. Then for every }$t\in\left[ 0,1\right] $%
\begin{equation*}
\theta\left( t\right) =-\int\limits_{0}^{t}\chi\left( s\right)
\,ds+\int\limits_{0}^{t}\varkappa\left( s\right) \,dw\left( s\right) . 
\end{equation*}
\end{lemma}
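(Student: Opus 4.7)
The plan is to combine the explicit formula (\ref{V80f9}) for $\mathcal{L}^{\ast}$ with the martingale representation of Theorem \ref{T1}, and then identify the resulting Ito integrand with $\varkappa$ by testing the adjoint pairing. Using (\ref{V80f10}) I would rewrite
\begin{equation*}
\theta(t)=\mu(t)-\int_{0}^{t}\chi(s)\,ds,\qquad \mu(t)\equiv \mathbf{M}\Big\{\int_{0}^{1}\chi(s,\omega)\,ds\,\Big|\,\mathcal{F}_{t}\Big\}.
\end{equation*}
A Jensen/Fubini estimate gives $\mathbf{M}|\mu(1)|^{r}\leqslant \mathbf{M}\int_{0}^{1}|\chi|^{r}\,ds=\Vert\chi\Vert_{r}^{r}<\infty$, so $\mu$ is an $\{\mathcal{F}_{t}^{w}\}$-martingale with $\mu(1)\in L_{r}(\Omega)$ for some $r\in(1,2]$.

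Theorem \ref{T1} then produces $\lambda\in N_{r}(w)$ with $\mu(t)=\mathbf{M}\mu(1)+\int_{0}^{t}\lambda(s)\,dw(s)$, whence
\begin{equation*}
\theta(t) = -\int_{0}^{t}\chi(s)\,ds + \int_{0}^{t}\lambda(s)\,dw(s) + \mathbf{M}\int_{0}^{1}\chi(s)\,ds.
\end{equation*}
The remaining task, which I regard as the main step, is to identify $\lambda$ with $\varkappa=\mathcal{J}^{\ast}(\chi)$. I would verify it against an arbitrary test function $\varphi\in N_{r'}$, where $r'=r/(r-1)\geqslant 2$. Setting $M_{s}:=\int_{0}^{s}\varphi\,dw$, Fubini (justified by $\chi\in L_{r}$ and the bound $\mathbf{M}\sup_{s}|M_{s}|^{r'}<\infty$ obtained from (\ref{V80f2})--(\ref{V80f3})) yields
\begin{equation*}
\langle\chi,\mathcal{J}(\varphi)\rangle=\mathbf{M}\int_{0}^{1}\chi(s)M_{s}\,ds=\int_{0}^{1}\mathbf{M}[\chi(s)M_{s}]\,ds.
\end{equation*}
Since $M_{s}=\mathbf{M}\{M_{1}|\mathcal{F}_{s}\}$ and $\chi(s)$ is $\mathcal{F}_{s}$-measurable, the integrand equals $\mathbf{M}[\chi(s)M_{1}]$, and the $s$-integral collapses to $\mathbf{M}[\mu(1)M_{1}]$. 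Substituting $\mu(1)=\mathbf{M}\mu(1)+\int_{0}^{1}\lambda\,dw$ and invoking Lemma \ref{L2} (in the mixed exponent form $N_{r}$--$N_{r'}$, which the proof of Lemma \ref{L2} in fact gives) produces $\mathbf{M}[\mu(1)M_{1}]=\mathbf{M}\int_{0}^{1}\lambda\varphi\,ds=\langle\lambda,\varphi\rangle$, so $\lambda=\varkappa$ as elements of $N_{r}$.

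The hard part is really the routine but fiddly integrability bookkeeping to make Fubini legal and to push Lemma \ref{L2} past the case of equal exponents. One further nuisance is the deterministic constant $\mathbf{M}\int_{0}^{1}\chi\,ds=\theta(0)$ that appears naturally in my derivation but is suppressed in the stated formula; it must either be absorbed into $\theta$ or the identity read modulo this scalar. Viewed through the lens of the preceding lemma, this constant is nothing but the projection of $\mu(1)$ onto $\mathcal{M}_{2}^{\perp}$, which for the Wiener filtration consists exactly of constants.
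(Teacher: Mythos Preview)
Your argument is correct and follows essentially the same route as the paper's own proof: the paper invokes (\ref{V80f10}) to write $\theta=\mu-\mathcal{L}(\chi)$, applies Theorem~\ref{T1} to represent the martingale $\mu$ as $\mathbf{M}\mu(t)+\int_{0}^{t}\varkappa_{1}\,dw$ with $\varkappa_{1}\in N_{r}$, and then says that $\varkappa_{1}=\varkappa$ ``can be established similar to preceding lemma (using also Lemma~\ref{L2})'' --- which is exactly the adjoint-pairing computation you wrote out in detail. Your remark about the deterministic constant $\mathbf{M}\!\int_{0}^{1}\chi\,ds=\theta(0)$ is well taken: the paper's displayed identity indeed suppresses this term (compare the preceding lemma, where it is absorbed into the $h\in\mathcal{M}_{2}^{\perp}$ summand), so the formula should be read modulo that scalar.
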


\begin{proof}
According to Theorem \ref{T1} the martingale $\mu\left( t\right) $ can be
represented in the form%
\begin{equation*}
\mu\left( t\right) =\mathbf{M}\mu\left( t\right)
+\int\limits_{0}^{t}\varkappa_{1}\left( s\right) \,dw\left( s\right) , 
\end{equation*}
where $\varkappa_{1}\in N_{r}.$ The equality $\varkappa_{1}=\varkappa$ can
be established similar to preceding lemma (using also Lemma \ref{L2})
\end{proof}

\begin{lemma}
\textit{Let }$\chi\in L_{2}$\textit{, }$\theta=\mathcal{L}^{\ast}\left(
\chi\right) $\textit{. Expand }$\theta\in L_{2}$ to the sum%
\begin{equation*}
\theta=\mathcal{\tilde{J}}\left( \lambda\right) +\mathcal{\tilde{P}}\left(
\mu\right) +\theta_{1}, 
\end{equation*}
where\textit{\ }$\lambda\in\overline{T_{2}}$\textit{, }$\mu\in\overline{T_{2}%
}\left( \nu\right) $\textit{, }$\theta_{1}\in L_{2}^{\perp}$\textit{. Then
functions }$\lambda\left( t,\tau,\omega\right) ,\mu\left( t,\tau
,\omega,y\right) $ are continuous for\textit{\ }$t\geqslant\tau$ and a.s.%
\begin{align*}
\mathcal{J}^{\ast}\left( \chi\right) \left( t,\omega\right) & =\lambda\left(
t,t,\omega\right) , \\
\mathcal{P}^{\ast}\left( \chi\right) \left( t,\omega,y\right) & =\mu\left(
t,t,\omega,y\right) .
\end{align*}
\end{lemma}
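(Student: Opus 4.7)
The plan is to construct explicit representatives of $\lambda$ and $\mu$ by combining a martingale representation of $\chi(s)$ pointwise in $s$ with a stochastic Fubini, and then to appeal to uniqueness of the decomposition. First I use formula (\ref{V80f10}) to write $\theta(t)=-\mathcal{L}(\chi)(t)+M(t)$ with $M(t)=\mathbf{M}\{\int_0^1\chi(s)\,ds\mid \mathcal{F}_t\}\in\mathcal{M}_2$, and apply the preceding lemma of this section to obtain
$$
M(t)=M(0)+\mathcal{J}(\varkappa)(t)+\mathcal{P}(\alpha)(t)+h(t),\qquad h\in\mathcal{M}_2^{\perp},
$$
with $\varkappa=\mathcal{J}^{\ast}(\chi)$ and $\alpha=\mathcal{P}^{\ast}(\chi)$. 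The Ito term is already $\mathcal{J}(\varkappa)(t)=\tilde{\mathcal{J}}(\lambda_0)(t)$ for the representative $\lambda_0(t,r,\omega)\equiv\varkappa(r,\omega)$ (constant in $t$), and analogously $\mathcal{P}(\alpha)=\tilde{\mathcal{P}}(\mu_0)$.

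Next, for each fixed $s$ I apply the Brownian/Poisson/orthogonal martingale decomposition to the $\mathcal{F}_s$-measurable variable $\chi(s)-\mathbf{M}\chi(s)\in L_2(\Omega)$, producing integrands $\rho_w(s,r,\omega)$, $\rho_\nu(s,r,\omega,y)$ and a residual $\chi^{\perp}(s)$ which is pointwise the time-$s$ value of a martingale in $\mathcal{M}_2^{\perp}$, satisfying
$$
\chi(s)=\mathbf{M}\chi(s)+\int_0^s\rho_w(s,r)\,dw(r)+\int_0^s\!\!\int\rho_\nu(s,r,y)\,\tilde{\nu}(dr,dy)+\chi^{\perp}(s).
$$
Integrating in $s$ and swapping orders by a stochastic Fubini (justified through the step-function approximation used to construct $\tilde{\mathcal{J}}$ and as in Lemma \ref{ad}(d)),
$$
\mathcal{L}(\chi)(t)=\int_0^t\mathbf{M}\chi(s)\,ds+\tilde{\mathcal{J}}(\lambda_1)(t)+\tilde{\mathcal{P}}(\mu_1)(t)+\int_0^t\chi^{\perp}(s)\,ds,
$$
with $\lambda_1(t,r,\omega)=\int_r^t\rho_w(s,r,\omega)\,ds$ and $\mu_1(t,r,\omega,y)=\int_r^t\rho_\nu(s,r,\omega,y)\,ds$; in particular $\lambda_1(t,t,\omega)=0$ and $\mu_1(t,t,\omega,y)=0$.

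Subtracting, I read off the decomposition $\theta=\tilde{\mathcal{J}}(\lambda)+\tilde{\mathcal{P}}(\mu)+\theta_1$ with
$$
\lambda(t,r,\omega)=\varkappa(r,\omega)-\int_r^t\rho_w(s,r,\omega)\,ds,\qquad \mu(t,r,\omega,y)=\alpha(r,\omega,y)-\int_r^t\rho_\nu(s,r,\omega,y)\,ds,
$$
and $\theta_1(t)=M(0)-\int_0^t\mathbf{M}\chi(s)\,ds+h(t)-\int_0^t\chi^{\perp}(s)\,ds$. Both chosen representatives are absolutely continuous in $t$ on $\{t\ge r\}$, so $\lambda(t,t,\omega)=\varkappa(t,\omega)=\mathcal{J}^{\ast}(\chi)(t,\omega)$ and $\mu(t,t,\omega,y)=\alpha(t,\omega,y)=\mathcal{P}^{\ast}(\chi)(t,\omega,y)$. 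Uniqueness of the decomposition, coming from the injectivity of $\tilde{\mathcal{J}}$ and $\tilde{\mathcal{P}}$ via the left-hand inequality in (\ref{V80f12}) and its Poisson isometry analogue, identifies these with the $\lambda,\mu$ in the statement.

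The main obstacle is twofold. First, a clean justification of the stochastic Fubini identity
$$
\int_0^t\!\!\int_0^s\rho_w(s,r)\,dw(r)\,ds=\int_0^t\!\!\int_r^t\rho_w(s,r)\,ds\,dw(r)
$$
together with its Poisson counterpart, which also requires jointly measurable-in-$s$ selections of $\rho_w$ and $\rho_\nu$. Second, verifying $\theta_1\in L_2^{\perp}$, in particular that $\int_0^{\cdot}\chi^{\perp}(s)\,ds\perp L_2(w)\oplus L_2(\nu)$: I would use that $\chi^{\perp}(s)$ is the time-$s$ value of a martingale in $\mathcal{M}_2^{\perp}$, and reduce via Fubini and Lemma \ref{ad}(b) the pairing of $\int_0^{\cdot}\chi^{\perp}(s)\,ds$ with an element of $L_2(w)$ to a pairing of $\chi^{\perp}(s)$ with an element of $\mathcal{M}_2^{w}$ at time $s$, which vanishes by construction (mirroring the argument that places $h$ in $L_2^{\perp}$).
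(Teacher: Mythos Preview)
Your argument is correct and in substance coincides with the paper's: with $\rho_w$ identified with the paper's $\lambda_1$, both routes yield the same representative $\lambda(t,\tau)=\int_t^1\lambda_1(s,\tau)\,ds$, since your formula $\lambda(t,r)=\varkappa(r)-\int_r^t\rho_w(s,r)\,ds$ collapses to this via $\varkappa(r)=\int_r^1\lambda_1(s,r)\,ds$ from (\ref{V80f15}). The organization differs slightly. The paper does not pass through the preceding lemma and then redo a pointwise martingale decomposition of $\chi(s)$; instead it decomposes $\chi=\chi_1+\mathcal{\tilde{J}}(\lambda_1)$ with $\chi_1\in L_2^{\perp}(w)$ once, using the already-built machinery, so that your $\rho_w$ is obtained as a ready-made element $\lambda_1\in\overline{T_2}=N_2(\mathcal{B}\times\mathcal{BF})$ and the joint-measurability issue you raise disappears. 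It then applies $\mathcal{L}^{\ast}$ to each summand and invokes Lemma~\ref{ad}(d) for exactly the stochastic Fubini identity you flag as the main obstacle. For the orthogonal piece, the paper shows $\mathcal{L}^{\ast}(\chi_1)\in L_2^{\perp}(w)$ by the dual observation that $\mathcal{L}$ maps $L_2(w)$ into $L_2(w)$ (again via Lemma~\ref{ad}(d)); this is a cleaner packaging of your proposed verification that $\theta_1\in L_2^{\perp}$, and avoids having to argue separately about $h$ and $\int_0^{\cdot}\chi^{\perp}(s)\,ds$.
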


\begin{proof}
Note that $\overline{T_{2}}=N_{2}\left( \mathcal{B}\times\mathcal{BF}\right) 
$. According to theorem 4 we have $\chi=\chi_{1}+\mathcal{\tilde{J}}\left(
\lambda_{1}\right) $, where $\lambda_{1}\in\overline{T_{2}}$, $\chi_{1}\in
L_{2}^{\perp}\left( w\right) $. Thus, $\theta=\theta_{2}+\mathcal{L}^{\ast }%
\mathcal{\tilde{J}}\left( \lambda_{1}\right) $, $\theta_{2}\equiv \mathcal{L}%
^{\ast}\left( \chi_{1}\right) $. According to (\ref{V80f10}) 
\begin{equation*}
\mathcal{L}^{\ast}\mathcal{\tilde{J}}\left( \lambda_{1}\right) \left(
t\right) =\mathbf{M}\left\{ \left. \int\limits_{0}^{1}\mathcal{\tilde{J}}%
\left( \lambda_{1}\right) \left( s\right) \,ds\right\vert \mathcal{F}%
_{t}\right\} -\int\limits_{0}^{t}\mathcal{\tilde{J}}\left( \lambda
_{1}\right) \left( s\right) \,ds. 
\end{equation*}
Transform the right-hand side of this equality 
\begin{equation*}
\begin{array}{c}
\mathbf{M}\left\{ \left. \int\limits_{0}^{1}\left(
\int\limits_{\tau}^{1}\lambda_{1}\left( s,\tau\right) \,ds\right) dw\left(
\tau\right) \right\vert \mathcal{F}_{t}\right\} -\int\limits_{0}^{t}\left(
\int\limits_{\tau}^{t}\lambda_{1}\left( s,\tau\right) \,ds\right) \,dw\left(
\tau\right) = \\ 
=\int\limits_{0}^{t}\left( \int\limits_{\tau}^{1}\lambda_{1}\left(
s,\tau\right) \,ds\right) dw\left( \tau\right) -\int\limits_{0}^{t}\left(
\int\limits_{\tau}^{t}\lambda_{1}\left( s,\tau\right) \,ds\right) \,dw\left(
\tau\right) = \\ 
=\int\limits_{0}^{t}\left( \int\limits_{t}^{1}\lambda_{1}\left(
s,\tau\right) \,ds\right) dw\left( \tau\right) ,\;t\in\left[ 0,1\right]%
\end{array}
\end{equation*}
(see item (d) of Lemma \ref{ad}). It is easy to prove that function%
\begin{equation*}
\lambda_{2}\left( t,\tau,\omega\right) \equiv\int\limits_{t}^{1}\lambda
_{1}\left( s,\tau,\omega\right) \,ds\in N_{2}\left( \mathcal{B}\times%
\mathcal{BF}\right) =\overline{T_{2}}. 
\end{equation*}
Thus, as it was mentioned in the proof of lemma 6, 
\begin{equation*}
\mathcal{L}^{\ast}\mathcal{\tilde{J}}\left( \lambda_{1}\right) \left(
t\right) =\int\limits_{0}^{t}\lambda_{2}\left( t,\tau,\omega\right)
\,dw\left( \tau\right) =\mathcal{\tilde{J}}\left( \lambda_{2}\right) \left(
t\right) 
\end{equation*}
for almost all $t$ a.s. Let us next show that $\theta_{2}\in L_{2}^{\perp
}\left( w\right) $. Indeed, if $\varphi=\mathcal{\tilde{J}}\left(
\psi\right) \in L_{2}\left( w\right) $ then for almost all $t$ a.s. 
\begin{gather*}
\mathcal{L}\left( \varphi\right) \left( t\right) =\int\limits_{0}^{t}\left(
\int\limits_{\tau}^{t}\psi\left( s,\tau,\omega\right) \,ds\right) dw\left(
\tau\right) =\mathcal{\tilde{J}}\left( \mu _{1}\right) \left( t\right) \in
L_{2}\left( w\right) , \\
\mu_{1}\left( t,\tau,\omega\right) \equiv\int\limits_{\tau}^{t}\psi\left(
s,\tau,\omega\right) \,ds\in N_{2}\left( \mathcal{B}\times\mathcal{BF}%
\right) .
\end{gather*}
Since $\chi_{1}\in L_{2}^{\perp}\left( w\right) $, it follows%
\begin{equation*}
\left\langle \varphi,\theta_{2}\right\rangle =\left\langle \varphi ,\mathcal{%
L}^{\ast}\left( \chi_{1}\right) \right\rangle =\left\langle \mathcal{L}%
\left( \varphi\right) ,\chi_{1}\right\rangle =0 
\end{equation*}
that prove our statement. Thus, we prove that%
\begin{equation*}
\theta=\mathcal{\tilde{J}}\left( \lambda_{2}\right) +\theta_{2},\;\theta
_{2}\in L_{2}^{\perp}\left( w\right) . 
\end{equation*}
Comparing this formula with the initial we receive%
\begin{equation*}
\lambda\left( t,\tau,\omega\right) =\lambda_{2}\left( t,\tau,\omega\right)
=\int\limits_{t}^{1}\lambda_{1}\left( s,\tau,\omega\right) \,ds. 
\end{equation*}
But in the other hand according to the theorem 2%
\begin{equation*}
\mathcal{J}^{\ast}\left( \chi\right) \left( t,\omega\right) =\int
\limits_{t}^{1}\lambda_{1}\left( s,t,\omega\right) \,ds. 
\end{equation*}
Combining these two equalities, we obtain the first statement of the lemma.
The second statement can be obtained analogously
\end{proof}

\end{document}